\numberwithin{equation}{section}
\newtheorem{thm}{Theorem}[section]
\newtheorem*{conj*}{Conjecture}
\newtheorem{prop}[thm]{Proposition}
\newtheorem{lem}[thm]{Lemma}
\newtheorem{cor}[thm]{Corollary}
\newtheorem*{fact*}{Fact}
\newtheorem{exam}[thm]{Example}
\newtheorem{ithm}{Theorem}
\newtheorem{iprop}[ithm]{Proposition}
\newtheorem*{iprob*}{Problem}
\newtheorem{rem}[thm]{Remark}
\newtheorem*{rem*}{Remark}
\newtheorem{irem}[ithm]{Remark}
\newtheorem{ithmbis}{Theorem}
\theoremstyle{definition}
\newtheorem{defi}[thm]{Definition}
\newtheorem{merci}[ithm]{Acknowledgements}
\newcommand{\sA}{\mathscr{A}}
\newcommand{\sB}{\mathscr{B}}
\newcommand{\sF}{\mathscr{F}}
\newcommand{\sO}{\mathscr{O}}
\newcommand{\ZZ}{\mathbf{Z}}
\newcommand{\RR}{\mathbf{R}}
\newcommand{\NN}{\mathbf{N}}
\newcommand{\GG}{\mathbf{G}}
\newcommand{\FF}{\mathbf{F}}
\newcommand{\QQ}{\mathbf{Q}}
\newcommand{\SO}{\mathbf{SO}}
\newcommand{\SL}{\mathbf{SL}}
\newcommand{\SSS}{\mathbf{S}}
\newcommand{\hb}{\mathrm{H}_{\mathrm{b}}}
\newcommand{\hc}{\mathrm{H}_{\mathrm{c}}}
\newcommand{\hcb}{\mathrm{H}_{\mathrm{cb}}}
\def\hbc{\hcb}
\newcommand{\hh}{\mathrm{H}}
\newcommand{\bu}{\bullet}
\newcommand{\simplex}{\Delta}
\newcommand{\se}{\subseteq}
\newcommand{\inv}{^{-1}}
\newcommand{\fhi}{\varphi}
\newcommand{\lra}{\longrightarrow}
\DeclareMathOperator{\Ker}{Ker}
\newcommand{\nerve}{N}
\newcommand{\order}{B}
\newcommand{\qtop}{\overline{q}}
\DeclareMathOperator{\sub}{Sub}
\DeclareMathOperator{\vertex}{Vert}
\title[Flatmates and bounded cohomology of algebraic groups]{Flatmates and the bounded cohomology \\ of algebraic groups}
\author[Nicolas Monod]{Nicolas Monod}
\address{
EPFL,
Switzerland}
\begin{document}

\begin{abstract}
For all algebraic groups over non-Archimedean local fields, the bounded cohomology vanishes. This follows from the corresponding statement for automorphism groups of Bruhat--Tits buildings, which hinges on the solution to the flatmate conjecture raised in earlier work with Bucher. Vanishing and invariance theorems for arithmetic groups are derived.
\end{abstract}

\maketitle


\section{Introduction}
Our main goal is the following vanishing theorem.

\begin{ithm}[Algebraic groups]\label{ithm:algebraic}~\\
Let $\GG$ be any algebraic group over a non-Archimedean local field~$k$.

Then the continuous bounded cohomology of $\GG(k)$ with real coefficients vanishes in every positive degree.
\end{ithm}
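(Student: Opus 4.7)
The plan is to prove the theorem in two steps: first reduce to the case where $\GG$ is semisimple by peeling off the amenable radical, and then invoke the building-theoretic vanishing statement announced in the abstract.

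For the reduction, let $R$ be the solvable radical of the identity component of $\GG$. The group $R(k)$ is an amenable locally compact group: its unipotent part is topologically nilpotent and hence amenable, while the torus quotient is an extension of a compact group by a finitely generated free abelian group. If a non-trivial abelian variety factor is present via Chevalley's decomposition, its $k$-points form a compact group, which is equally amenable. Combining the facts that closed amenable normal subgroups and finite-index subgroups are both transparent to positive-degree real-coefficient continuous bounded cohomology, one reduces to the case where $\GG$ is semisimple.

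For semisimple $\GG$ over the non-Archimedean local field $k$, the group $\GG(k)$ acts properly and cocompactly on its Bruhat--Tits building $X$, a locally finite Euclidean building, and embeds as a closed subgroup of $\Aut(X)$. The paper's building-theoretic result --- vanishing of $\hcb^\bullet(\Aut(X))$ in positive degree, hinging on the flatmate conjecture --- should then transfer to $\GG(k)$ with no essential change: one expects the construction producing primitives of bounded cocycles on $\Aut(X)$ to be built equivariantly out of the intrinsic geometry of $X$ and its boundary, and any such construction restricts to an equivariant one for any closed subgroup of $\Aut(X)$ acting on $X$ by type-preserving automorphisms.

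The dominant obstacle is plainly the flatmate conjecture itself: this is the geometric fact that must be invoked to build primitives of bounded cocycles, presumably by averaging over flats or chambers at infinity associated with two prescribed boundary objects. The reductions --- both the structural one to the semisimple case and the passage from $\Aut(X)$ back to $\GG(k)$ --- rely only on amenability and equivariance, and should be essentially routine once the building-theoretic vanishing statement is in hand.
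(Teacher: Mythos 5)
Your overall architecture (strip an amenable radical, then invoke the building theorem) is the same as the paper's, but two of your steps do not work as written.

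The serious gap is the passage from $\Aut(X)$ back to $\GG(k)$. Bounded acyclicity emphatically does \emph{not} pass to closed subgroups of $\Aut(X)$ acting properly by type-preserving automorphisms: a cocompact lattice $\Gamma<\SL_2(\QQ_p)$ is such a subgroup, is virtually free, and has infinite-dimensional $\hb^2(\Gamma)$ (this is exactly the non-vanishing exploited in Proposition~C of the paper). The proof of the building theorem does not construct an ``intrinsic, equivariant primitive'' on $X$ that one could restrict to subgroups; it identifies the $G$-invariant bounded cochains on the flatmate complex with the $H$-invariant cochains on a single apartment, $H$ being the amenable apartment stabiliser, and this identification uses \emph{strong transitivity} in an essential way --- which is precisely the hypothesis of the building theorem. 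The correct move is to apply that theorem directly to the simple isotropic factors of $\GG(k)$: their actions on the Bruhat--Tits buildings are strongly transitive by Bruhat--Tits theory and proper because the centres are finite. No detour through $\Aut(X)$ is available.

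The second gap is in the reduction. The map $\GG(k)\to(\GG/R)(k)$ is in general not surjective (its cokernel is controlled by Galois cohomology of $R$), so dividing out the amenable normal subgroup $R(k)$ leaves you with a possibly proper normal subgroup of the $k$-points of a semisimple group, not with ``the semisimple case''. The paper addresses this: the image is Zariski-dense and normal, hence contains $\SSS(k)^+$ by Tits's simplicity theorem; the preimage $G^+$ of $\SSS(k)^+$ is cocompact, hence co-amenable, in $G$; and $\SSS(k)^+$ is an almost-direct product of the groups $\GG_i(k)^+$, each of which still acts strongly transitively on its building. The same $(k)^+$ subtlety is unavoidable in positive characteristic, where $\GG_i(k)^+\neq\GG_i(k)$ can occur. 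Your handling of the radical itself and of the anti-affine part (compactness of the $k$-points of an abelian variety; the paper instead uses the affinisation quotient, whose kernel is central) is essentially fine, but without the surjectivity/$(k)^+$ analysis the reduction does not close.
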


\noindent
It is understood that the group $G=\GG(k)$ of $k$-points of the scheme $\GG$ is endowed with the locally compact topology determined by the local field $k$. Examples are (almost-)simple linear algebraic groups such as
\[
G=\SL_d(k) \text{ or } G=\SO_{d}(k), \kern5mm \text{both with } k=\QQ_p \text{ or }  k=\FF_q((t)),
\]
for which \Cref{ithm:algebraic} can be viewed as a strengthening of the classical vanishing theorem of Garland, Casselman, Wigner and Harder~\cite{Garland, Casselman, Casselman-Wigner, Harder77}, noting that non-trivial coefficients were already treated in~\cite{MonodVT}. Structure theory and general principles will reduce \Cref{ithm:algebraic} to the case of such simple algebraic groups; in view of Bruhat--Tits theory, that case is, in turn, contained in the following statement.

\begin{ithm}[Buildings]\label{ithm:building}~\\
Let $G$ be a locally compact group acting properly by automorphisms on a locally finite Euclidean building.

If this action is strongly transitive, then the continuous bounded cohomology of $G$ with real coefficients vanishes in every positive degree.
\end{ithm}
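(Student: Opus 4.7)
The plan is to realise $\hcb^*(G,\RR)$ as the cohomology of an alternating $L^\infty$ complex on the space of chambers at infinity, and then to use the resolved flatmate conjecture to show that this complex is concentrated in degree zero.

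First, let $\Delta_\infty$ denote the spherical building at infinity of $X$, and let $\mathcal{C}$ be its set of chambers, equipped with the compact metrizable topology inherited from the visual compactification of the locally finite building $X$. The stabiliser $P\le G$ of a chamber in $\Delta_\infty$ plays the role of a Borel subgroup in this Bruhat--Tits setting: strong transitivity endows $G$ with a spherical BN-pair for which $P$ is amenable (compact-by-(closed solvable) in the usual way). Consequently $\mathcal{C}\cong G/P$ is an amenable $G$-space in the sense of Zimmer, and the Burger--Monod resolution theorem delivers a canonical isomorphism
$$
\hcb^*(G,\RR) \;\cong\; H^*\bigl(L^\infty_{\mathrm{alt}}(\mathcal{C}^{\bu+1})^G\bigr).
$$
The theorem is therefore equivalent to the statement that $L^\infty_{\mathrm{alt}}(\mathcal{C}^{n+1})^G=0$ for every $n\ge 1$.

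Second, I would isolate the Weyl-group symmetry. For every $(n+1)$-tuple $(c_0,\dots,c_n)\in\mathcal{C}^{n+1}$ lying in a common ideal apartment $A$, strong transitivity implies that the $G$-stabiliser of $A$ surjects onto the spherical Weyl group $W=W(A)$, which acts simply transitively on the chambers of $A$ and contains in particular the opposition involution. Combining $G$-invariance with an odd permutation realised by a suitable element of $W$, the standard alternation argument forces any $f\in L^\infty_{\mathrm{alt}}(\mathcal{C}^{n+1})^G$ to vanish almost everywhere on the ``common-apartment'' locus
$$
\bigl\{\,(c_0,\dots,c_n)\in \mathcal{C}^{n+1} : \exists\,\text{apartment } A\subseteq\Delta_\infty \text{ with } c_0,\dots,c_n\in A\,\bigr\}.
$$

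Third, I would invoke the flatmate conjecture. Its resolution says, in essence, that tuples of ideal chambers generically share a common apartment at infinity, i.e.\ the common-apartment locus above is conull for the natural $G$-quasi-invariant measure class on $\mathcal{C}^{n+1}$. Combining this with the symmetry step yields $L^\infty_{\mathrm{alt}}(\mathcal{C}^{n+1})^G=0$ for every $n\ge 1$, and hence the theorem. The main obstacle is manifestly the flatmate conjecture itself: in a Euclidean building of rank $\ge 2$, two apartments of $X$ typically meet only in a convex polyhedral region of strictly lower dimension, so one cannot hope to accommodate arbitrary configurations in a single flat of $X$; the point of the conjecture is that at the level of the spherical building at infinity the geometry becomes tame enough for generic $(n+1)$-tuples to cohabit in a single ideal apartment. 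The other ingredients --- amenability of $P$, the Burger--Monod resolution and the sign-change/alternation trick --- are all by now classical.
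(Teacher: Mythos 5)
Your overall architecture (an amenable resolution plus a symmetry argument) is in the right spirit, but the specific route breaks down at several points, and it also misreads the flatmate conjecture. The flatmate complex lives on the \emph{Euclidean} building itself: its simplices are finite sets of \emph{vertices} of the building contained in a common flat (apartment of the Euclidean building), and the resolved conjecture asserts that this simplicial complex is \emph{uniformly acyclic}, i.e.\ admits a bounded contracting chain homotopy. It is not a statement about chambers at infinity, and in particular it does not say that generic tuples of ideal chambers share an ideal apartment. That statement is in fact false in rank $\geq 2$: already for $\SL_3(\QQ_p)$ a chamber at infinity is a complete flag in $k^3$ and an ideal apartment is a frame, and a generic triple of flags is adapted to no common frame. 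So your ``common-apartment locus'' is not conull and the third step collapses. Moreover, even on that locus the alternation step fails for $n\geq 2$: to force $f(c_0,\dots,c_n)=0$ you need an element of $G$ that permutes the \emph{set} $\{c_0,\dots,c_n\}$ by an odd permutation, whereas a nontrivial element of the Weyl group of $A$ generically moves the $c_i$ to other chambers of $A$ rather than permuting them among themselves; this is exactly why the classical opposition/ergodicity argument stops at pairs of opposite chambers. Finally, vanishing of $\hcb^n(G)$ is implied by, but not equivalent to, the vanishing of the modules $L^\infty_{\mathrm{alt}}(\mathcal{C}^{n+1})^G$ --- and these modules do not vanish anyway (cross-ratio-type invariants of $4$-tuples on $G/P$ already give nonzero alternating invariant functions for $\SL_2$).

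The actual proof runs along a different amenable slice. One takes the complex of bounded \emph{simplicial} cochains $\ell^\infty\big((\simplex_\sF X)_q\big)$ on the flatmate complex, where $X=\vertex(\sB)$. Properness of the action on the locally finite building makes these modules relatively injective; the uniform acyclicity of the flatmate complex (this is where the resolved conjecture enters, via its bounded chain contraction, which dualises to exactness of the $\ell^\infty$ cochain complex) makes them a resolution of $\RR$; hence $\hcb^\bu(G)$ is computed by the $G$-invariant subcomplex. Restriction to tuples of vertices lying in one fixed Euclidean apartment $E$ is then a bijection onto the $H$-invariant cochains on $(\vertex E)^{\bu+1}$, where $H=\Stab_G(E)$; injectivity uses transitivity on apartments and surjectivity uses strong transitivity. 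Since $H$ is compact-by-(virtually abelian), hence amenable and boundedly acyclic, everything vanishes. The amenable object is thus the stabiliser of a flat of the Euclidean building, not a minimal parabolic, and the analytic content of the flatmate theorem --- the bounded homotopy, not a genericity statement at infinity --- is precisely the ingredient missing from your sketch.
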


\noindent
Previously, this statement was only known in the special case of trees~\cite{Bucher-Monod_tree}.

\subsection*{Motivations}
Our first motive to establish \Cref{ithm:algebraic} is that the bounded cohomology of \emph{real or complex} algebraic groups, and more generally of Lie groups, remains very mysterious to this day even though it has been subjected to intense scrutiny ever since Gromov's seminal work~\cite{Gromov}. The original impetus for that study goes back to Milnor's 1958 paper~\cite{Milnor58} and the Milnor--Wood inequality: the fact that some characteristic classes happen to be bounded translates into non-trivial estimates for topological invariants of bundles and manifolds. Gromov conceptualised this by introducing bounded cohomology and proved that all characteristic classes of flat $G$-bundles, where $G$ is a $\RR$-algebraic group, are bounded. (Another proof was given by Bucher in~\cite{BucherKarlsson,Bucher07}; see also~\cite{Hartnick-Ott12} for related results.)

Nonetheless, Dupont's 1979 conjecture~\cite{Dupont} that all the real continuous cohomology classes of connected simple Lie groups are bounded remains open for many groups. In fact, if we exclude groups of Hermitian type, it is open for almost all other simple Lie groups; see~\cite{Hartnick-Ott12}. A stronger version of this conjecture, open for \emph{all} semi-simple (non-compact) groups, is as follows:

\begin{conj*}[Problem~A in~\cite{MonodICM}]~\\
Let $G$ be a connected semi-simple Lie group with finite center. Then the continuous bounded cohomology of $G$ with real coefficients is naturally isomorphic to its ordinary (continuous) cohomology.
\end{conj*}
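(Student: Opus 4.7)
The statement asserts that the comparison map
\[
c\colon \hcb^*(G;\RR) \longrightarrow \hc^*(G;\RR)
\]
is an isomorphism, so my plan is to exhibit an explicit section of $c$ and then verify bijectivity. By the van~Est theorem, continuous cohomology classes correspond to $G$-invariant closed differential forms $\omega$ on the symmetric space $X = G/K$. For each such $\omega$ of degree $n$, I would try to associate a bounded cocycle on the Furstenberg boundary $G/P$ by the ideal-simplex formula
\[
c_\omega(\xi_0,\ldots,\xi_n) \;=\; \int_{\simplex(\xi_0,\ldots,\xi_n)} \omega,
\]
where $\simplex(\xi_0,\ldots,\xi_n) \subset X$ denotes a canonical ideal simplex with vertices at the points $\xi_i \in G/P$, built by an iterated geodesic or barycentric join. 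Plugging this into the Burger--Monod resolution $L^\infty\!\bigl((G/P)^{\bu+1}\bigr)^G$ of $\hcb^*(G;\RR)$, and using a Dupont-style straightening to compare with the resolution of $\hc^*(G;\RR)$ by smooth cochains on $X$, one would conclude that $c[c_\omega]=[\omega]$ and hence that $c$ is surjective.

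For injectivity, I would import the geometric philosophy of \Cref{ithm:building}: a maximal flat in $X$ plays the Archimedean role of an apartment in a Bruhat--Tits building, and the \emph{flatmate} relation ought to admit a smooth counterpart detecting which configurations of boundary points $(\xi_0,\ldots,\xi_n)$ lie in a common flat. Given a bounded $n$-cocycle $b$ and a \emph{continuous} $(n-1)$-cochain $\beta$ with $d\beta = b$, I would try to replace $\beta$ by a bounded primitive through averaging over the compact group that stabilises a generic flat and exploiting the mixing/decay of matrix coefficients on $G/P$ to absorb the unbounded part of $\beta$ into a coboundary.

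The main obstacle will be the \emph{integrability and uniform boundedness} of the ideal-simplex integral $c_\omega$ on the singular strata of $(G/P)^{n+1}$. When $\rank_\RR G \ge 2$, ideal simplices with vertices in non-generic (in particular, non-opposed) position can have infinite volume in directions transverse to a wall, so $G$-invariant top forms can integrate to $+\infty$; controlling this is precisely the analytic difficulty that has kept the conjecture open beyond the Hermitian case. A secondary but structural difficulty is that, in contrast with the vanishing proved in \Cref{ithm:algebraic}, the target $\hc^*(G;\RR)$ is genuinely non-zero, so the argument must transport non-trivial classes across the comparison map rather than annihilate them. A successful implementation would therefore need a quantitative form of the flatmate construction that is simultaneously fine enough to detect singular configurations and robust enough to survive the integration of invariant forms at infinity.
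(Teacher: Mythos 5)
This statement is not a theorem of the paper: it is stated, and explicitly labelled, as an \emph{open conjecture} (Problem~A of~\cite{MonodICM}), open for \emph{all} non-compact semi-simple Lie groups; the paper proves only its non-Archimedean analogue (\Cref{ithm:algebraic}), where both sides vanish. So there is no proof to compare yours against, and your proposal does not close the gap --- indeed you candidly identify the very obstruction that keeps it open. Concretely: your surjectivity step is the classical Gromov--Dupont strategy of integrating an invariant form $\omega$ over straightened (ideal) simplices, and the missing ingredient is exactly the uniform bound $\sup_\xi |c_\omega(\xi_0,\dots,\xi_n)| < \infty$ over all configurations, including degenerate ones; for $\rank_\RR G \ge 2$ this is unknown outside the Hermitian case and a handful of low-degree results, and nothing in your sketch supplies it. Restating the difficulty is not an argument.

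The injectivity step has a more structural flaw. The set-wise stabiliser of a maximal flat in $X=G/K$ contains the full split Cartan subgroup $A$, which is non-compact, so ``averaging over the compact group that stabilises a generic flat'' is not available as written; and even granting some averaging, no mechanism is proposed for converting a continuous primitive $\beta$ of a bounded cocycle into a \emph{bounded} primitive --- decay of matrix coefficients controls unitary representations, not the growth of an arbitrary continuous cochain on $X^{n}$. Injectivity of $\hcb^n(G)\to\hc^n(G)$ is currently known essentially only in degree $2$ (Burger--Monod) and in scattered higher-degree cases. Finally, the ``flatmate'' technology of \Cref{ithm:building} does not transport to the Archimedean setting in the way you hope: the whole point of the paper's argument is that the apartment stabiliser in the Bruhat--Tits case is \emph{amenable}, hence boundedly acyclic, which collapses everything to zero; in the Lie case the analogous stabiliser is not amenable and the target $\hc^*(G;\RR)$ is non-zero, so the reduction has no analogue. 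Your proposal is a reasonable survey of why the conjecture is hard, but it is not a proof.
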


For semi-simple algebraic groups over non-Archimedean local fields, it is a classical result that the ordinary continuous real cohomology vanishes~\cite[Cor.~2]{Casselman-Wigner}, \cite[\S X]{Borel-Wallach}. In that sense, \Cref{ithm:algebraic} answers the analogue of the above conjecture in the non-Archimedean case.

This was previously only known for rank one groups, such as $\SL_2(\QQ_p)$, because the statement of \Cref{ithm:building} was only available in the particular case of trees~\cite{Bucher-Monod_tree}. (For higher rank groups, vanishing in degree two was obtained~\cite{Burger-Monod1} and the stability methods of~\cite{MonodVT} reduce degree three to the rank one case as proved in~\cite[Thm.~6.3.1]{GLMR_arxv4}. Nothing was known in higher degrees.)

In the Lie case originally considered for the above conjecture, the isomorphism is only known in the following low degrees. In degree $2$ by~\cite{Burger-Monod1}. In degree~$3$, for some groups by \cite{MonodJAMS,Pieters18,Bucher-Burger-Iozzi18} and very recently \cite{Mengual_deg3_arxv2}, using~\cite{Mengual-Hartnick_stab_arx,Mengual-Hartnick_quillen}, established degree~$3$ for all classical complex groups. In degree~$4$ it is known for $\SL_2(\RR)$ only~\cite{Hartnick-Ott15}.

\medskip

A second motivation for \Cref{ithm:algebraic} is that when ordinary cohomology is already known to vanish, as for simple non-Archimedean groups, then the bounded vanishing is a strict strengthening of vanishing. Indeed, it implies that even ``almost-cocycles'' must be trivial, as illustrated by the case of quasi-morphisms. That case, which concerns $n=2$ only, has a number of applications to rigidity. Higher vanishing and bounded acyclicity have been much studied recently, though mostly for ``large'' transformation groups without topology~\cite{Matsumoto-Morita,Loeh_note17,Monod_wreath,Monod-Nariman_inv,FFLM_binate,Campagnolo-Fournier-Facio-Lodha-Moraschini_arx_acc, FFF-monod-nariman_arx1}.

\subsection*{Arithmetic groups and non-trivial coefficients}
Our third incentive is the cohomology of \emph{discrete} groups. \Cref{ithm:algebraic} provides one of the main missing pieces for the study of the bounded cohomology of $S$-arithmetic groups. In the case of ordinary cohomology, this is the most classical motivation and the main reason why continuous cohomology of algebraic groups has been studied for general coefficients~\cite{Borel-Wallach}. Indeed, following Borel--Serre~\cite{Borel-Serre76}, to study the ``abstract'' cohomology of an $S$-arithmetic group $\Gamma$, one realises it as a lattice $\Gamma<G$ in a product of algebraic groups over various local fields (which is possible by results of Borel~\cite[\S8]{Borel63}, respectively, Behr--Harder~\cite{Behr69,Harder69} in positive characteristic). For instance,
\[
\Gamma = \SL_d(\ZZ[1/p])\kern5mm\text{in }  G=\SL_d(\RR) \times \SL_d(\QQ_p).
\]
In a suitable range of degrees, the cohomology of $\Gamma$ will then be determined by the continuous cohomology of $G$ with coefficients in an induction module. Therefore, vanishing results for $G$ with non-trivial coefficients will imply \emph{invariance theorems}, namely the statement that $\hh^n(\Gamma)$ is isomorphic to $\hc^n(G)$ for suitable $n$~\cite{Borel-Serre76,Serre71}. At that point, the vanishing for non-Archimedean groups will further indicate that $\hh^n(\Gamma)$ is given by the continuous cohomology of the Archimedean factors, which is known from Lie theory. In particular, if $G$ has no non-compact Archimedean factors, e.g., in positive characteristic, then one concludes a vanishing result for $\Gamma$ (still for suitable $n$ only), as originally conjectured by Serre.

\medskip
This classical picture has an analogue in bounded cohomology with notable differences. For non-trivial coefficients, the ordinary vanishing holds below the rank by work of Garland~\cite{GarlandICM,Garland}, Casselman~\cite{Casselman}, Casselman--Wigner~\cite{Casselman-Wigner}, Borel--Wallach~\cite{Borel-Wallach}. We established it for bounded cohomology below twice the rank~\cite{MonodVT}, by different methods but still using a form of Solomon--Tits theorem.

However, for trivial coefficients $\RR$, the vanishing of \Cref{ithm:algebraic} was previously unknown because the Bruhat--Tits building methods from ordinary cohomology fail in the bounded setting. To illustrate this, consider that ordinary vanishing \emph{above} the rank for arbitrary coefficients clearly holds since the rank is the dimension of this building, which is contractible. This sort of soft and easy principles fail completely for bounded cohomology, which is one of the reasons for its appeal, and for its difficulty. For instance, the tree of $\SL_2(\QQ_p)$ is a contractible one-dimensional simplicial complex~\cite{Serre77}, but this does not preclude degree-two bounded cohomology. Here is a folklore example:

\begin{iprop}[Coefficients]\label{iprop:SL2}~\\
There exist irreducible continuous unitary representations $\pi$ of $G=\SL_2(\QQ_p)$ with $\hbc^2(G, \pi)\neq 0$.
\end{iprop}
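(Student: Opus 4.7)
The plan is to exhibit $\pi$ as a spectral component of the regular representation $L^2(G/\Gamma)$ for a cocompact lattice $\Gamma<G$, transporting a non-trivial bounded cohomology class from $\Gamma$ up to $G$ by a Shapiro-type induction isomorphism.

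First, $G=\SL_2(\QQ_p)$ admits torsion-free cocompact lattices $\Gamma$: these arise arithmetically from units of a quaternion division algebra $D$ over $\QQ$ which is split at $p$ and ramified at the Archimedean place (with a congruence condition ensuring torsion-freeness). Such a $\Gamma$ acts freely and cocompactly on the Bruhat--Tits tree of $G$, so by Bass--Serre theory it is a non-abelian finitely generated free group. By the classical Brooks--Mitsumatsu construction of homogeneous quasi-morphisms, free groups of rank at least two satisfy $\hb^2(\Gamma,\RR)\neq 0$.

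Next, the Shapiro-type induction isomorphism in bounded cohomology, applied to the cocompact inclusion $\Gamma<G$, yields
\[
\hbc^2\bigl(G,\,L^2(G/\Gamma)\bigr)\;\cong\;\hb^2(\Gamma,\RR)\;\neq\;0.
\]
Cocompactness of $\Gamma$ guarantees that $L^2(G/\Gamma)$ decomposes as a Hilbert direct sum of irreducible unitary $G$-representations with finite multiplicities, and reduced continuous bounded cohomology commutes with orthogonal Hilbert direct sums of unitary coefficient modules. Hence the non-vanishing of $\hbc^2\bigl(G,L^2(G/\Gamma)\bigr)$ forces at least one irreducible summand $\pi$ to satisfy $\hbc^2(G,\pi)\neq 0$, which is what was to be shown.

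The main obstacle is the induction isomorphism in its precise $L^2$ form: the standard Shapiro lemma in bounded cohomology is most transparently formulated with $L^\infty$-induced coefficients, and one must be careful about reduced versus unreduced bounded cohomology when passing to the unitary module $L^2(G/\Gamma)$. For uniform $\Gamma$ this is standard and can be bridged via the continuous $G$-equivariant inclusion $L^\infty(G/\Gamma)\hookrightarrow L^2(G/\Gamma)$ (available because $G/\Gamma$ has finite Haar measure), combined with the fact that the transfer map of a quasi-morphism $\varphi\colon\Gamma\to\RR$ furnishes an explicit witness to the non-triviality that survives projection onto each appropriate spectral component.
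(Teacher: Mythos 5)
Your strategy is the same as the paper's (free cocompact lattice, quasi-morphisms, induction to $G$, then extraction of an irreducible summand of $L^2(G/\Gamma)$), but the two places where you change coefficient modules are exactly where the content lies, and neither is justified as written. First, the induction isomorphism of bounded cohomology gives $\hcb^2(G,L^\infty(G/\Gamma))\cong\hb^2(\Gamma,\RR)$; there is no ``Shapiro-type'' isomorphism with $L^2(G/\Gamma)$-coefficients, and you acknowledge this but your proposed bridge --- the inclusion $L^\infty(G/\Gamma)\hookrightarrow L^2(G/\Gamma)$ together with an ``explicit witness that survives projection'' --- does not explain why the induced map on $\hcb^2$ is non-zero. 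Second, the assertion that (reduced) continuous bounded cohomology commutes with orthogonal Hilbert direct sums of unitary modules is not a theorem in general: if a class vanishes in every summand, the bounding primitives need not be summable with uniform control, and you would additionally have to reconcile reduced with unreduced cohomology, since non-vanishing of one does not formally give non-vanishing of the other.

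What rescues both steps --- and what the paper actually uses --- is the degree-two realisation of Burger--Monod: for a coefficient module $E$, the space $\hcb^2(G,E)$ is naturally isomorphic to the space of $G$-invariant alternating weak-* measurable \emph{cocycles} on $B^3$, where $B$ is a doubly ergodic boundary (here $\PP^1(\QQ_p)$). Because this identifies $\hcb^2$ with a cocycle space with no coboundaries to quotient by, it is Hausdorff (so reduced equals unreduced), it is natural in $E$, the map induced by the adjoint inclusion $L^\infty(G/\Gamma)\hookrightarrow L^2(G/\Gamma)$ becomes injective, and a non-zero boundary cocycle valued in a Hilbert sum $\bigoplus_i\pi_i$ has a non-zero component in some $\pi_i$, which is again a cocycle and hence a non-zero class in $\hcb^2(G,\pi_i)$. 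This is the ``double ergodicity with coefficients'' argument (Burger--Monod, Monod--Shalom) that the paper cites at both steps; it is specific to degree two. Without it your argument has a genuine gap; with it, your proof coincides with the paper's.
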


Returning to arithmetic groups, we obtain a vanishing theorem for these discrete groups in the spirit of Garland's results on Serre's conjecture by combining \Cref{ithm:algebraic} with our results from~\cite{MonodVT}.

\begin{ithm}[Discrete groups]\label{ithm:Sadic:0}~\\
Let $K$ be a global field and $\GG$ a connected simple linear $K$-group which is anisotropic over the Archimedean completions of $K$.

Let $S$ be a finite set of valuation classes of $K$ and let $\Gamma=\GG(K(S))$ be the corresponding $S$-arithmetic group over the ring $K(S)$ of $S$-integers.

Then $\hb^n(\Gamma)=0$ for all $0<n<2 \sum_{v\in S} \mathrm{rank}_{K_v}(\GG)$.
\end{ithm}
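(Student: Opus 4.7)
My strategy is to pass from $\Gamma$ to its $S$-adic envelope, translate bounded cohomology via an induction isomorphism, and reduce from the product to individual simple factors via iterated Hochschild--Serre. The two factor-level inputs are complementary: \Cref{ithm:algebraic} deals with trivial coefficients in every positive degree, while~\cite{MonodVT} deals with coefficient modules without invariant vectors in degrees strictly below $2\,\mathrm{rank}$. The range $2\sum_v \mathrm{rank}_{K_v}(\GG)$ is then exactly what a spectral-sequence bookkeeping produces.

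First, I would realise $\Gamma = \GG(K(S))$ as a lattice in the locally compact group $G := \prod_{v\in S} \GG(K_v)$: this is Borel's theorem in characteristic zero~\cite{Borel63} and the Behr--Harder theorem in positive characteristic~\cite{Behr69,Harder69}. The Archimedean-anisotropy hypothesis ensures that, when $K$ is a number field, the Archimedean factors of $G$ are compact and may be discarded without affecting bounded cohomology. The Burger--Monod induction isomorphism then identifies
\[
\hb^n(\Gamma) \cong \hbc^n\bigl(G,\, L^\infty(\Gamma\backslash G)\bigr).
\]

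Splitting $L^\infty(\Gamma\backslash G) = \RR \oplus V$ into the constants and a complement $V$ which, as a unitary representation, carries no non-zero $G$-invariant vector, I would iterate a Hochschild--Serre spectral sequence
\[
E_2^{p,q} = \hbc^p\bigl(\GG(K_v),\, \hbc^q(H_v, W)\bigr) \Rightarrow \hbc^{p+q}(G, W),
\]
with $H_v := \prod_{w\neq v} \GG(K_w)$, over the factors $v \in S$. When $W=\RR$, \Cref{ithm:algebraic} kills every positive degree at each factor, giving $\hbc^n(G, \RR) = 0$ for $n > 0$. When $W=V$, the factor-wise vanishing of~\cite{MonodVT} in degrees below $2\,\mathrm{rank}_{K_v}(\GG)$ accumulates through the spectral sequence and kills every term $E_2^{p,q}$ of total degree $p+q < 2\sum_{v\in S}\mathrm{rank}_{K_v}(\GG)$.

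The main obstacle is to guarantee that at each intermediate stage of this iterated reduction the coefficient module remains within the class to which~\cite{MonodVT} applies---concretely, that it retains the property of having no non-zero invariant vectors under the remaining factors. This hinges on the irreducibility of the lattice $\Gamma$ in $G$, a consequence of strong approximation for simply connected simple $K$-groups (one may pass to a simply connected cover at no cohomological cost). Once this stability is secured, the combinatorial diagonal argument in the spectral sequence delivers the stated vanishing range.
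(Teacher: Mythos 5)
Your overall architecture---realise $\Gamma$ as a lattice in $G=\prod_{v\in S}\GG(K_v)$, induct to get $\hb^n(\Gamma)\cong\hbc^n\big(G,L^\infty(\Gamma\backslash G)\big)$, split off the constants, kill the trivial summand with \Cref{ithm:algebraic} and the complement with~\cite{MonodVT}---is indeed the right shape, and the trivial-coefficient half is fine (it matches the paper's use of \Cref{ithm:algebraic} together with \Cref{prop:reduc}(iii)). But the paper does not re-derive the invariance statement from factor-wise inputs: it quotes \cite[Cor.~1.4]{MonodVT}, which already asserts that for the irreducible lattice $\Gamma$ in $\prod_{v\in S_1}\GG(K_v)$ the restriction map $\hbc^n\big(\prod_{v\in S_1}\GG(K_v)\big)\to\hb^n(\Gamma)$ is an isomorphism for all $n<2\sum_{v\in S}\mathrm{rank}_{K_v}(\GG)$; under the anisotropy hypothesis the source then vanishes by \Cref{ithm:algebraic}, and the proof is two lines.

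The genuine gap in your version is the ``accumulation'' step. Factor-wise vanishing of $\hbc^{q}(\GG(K_v),-)$ in degrees $q<2\,\mathrm{rank}_{K_v}(\GG)$ does not propagate through an iterated Hochschild--Serre spectral sequence to vanishing below the \emph{sum} $2\sum_{v}\mathrm{rank}_{K_v}(\GG)$. Concretely, writing $G=\GG(K_v)\times H_v$, the terms $E_2^{p,q}=\hbc^p\big(\GG(K_v),\hbc^q(H_v,V)\big)$ with $q\geq 2\,\mathrm{rank}(H_v)$ and $p$ small are not controlled: you would need to know that $\hbc^q(H_v,V)$ has no non-zero $\GG(K_v)$-invariants, which does not follow from irreducibility of $\Gamma$, and---worse---that $\hbc^q(H_v,V)$ is a coefficient module at all; in bounded cohomology these spaces need not be Hausdorff, so the $E_2$-identification you write down is not available in general (the paper only ever uses the degenerate cases of Prop.~12.2.1--12.2.2 of~\cite{Monod}, where one factor is boundedly acyclic). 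Obtaining the additive range $2\sum_v\mathrm{rank}_{K_v}(\GG)$ is precisely the content of the product-level theorem in~\cite{MonodVT}, proved there by a direct argument on the product rather than by iterating the single-factor statement. As written, your plan either tacitly assumes that product-level result (in which case you should invoke it, or directly its lattice corollary \cite[Cor.~1.4]{MonodVT}), or it is missing the key input. A minor additional imprecision: $L^\infty_0(\Gamma\backslash G)$ is a dual Banach coefficient module, not a unitary representation, and the no-invariant-vectors condition must be verified for each individual factor, which is where irreducibility enters.
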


The assumption on Archimedean completions is trivially satisfied when $K$ has positive characteristic. In characteristic zero, a concrete example is as follows. Let $p$ be a prime~$\equiv 1$ mod~$4$ and let $d\geq 5$. Then
\[
\hb^n\big(\SO_d(\ZZ[1/p])\big) \ =0 \kern5mm \forall 0<n<  d-1
\]
and also $n=d-1$ for $d$ even, since the $\QQ_p$-rank of $\SO_d$ is $\lfloor d/2 \rfloor$, using $p\equiv 1$ mod~$4$ via Gau\ss's Theorem~108 \cite[\S IV.108]{GaussDA}. (The restriction $d\geq 5$ is to avoid the non-simple case $d=4$ and the trivial range of $n$ for $d\leq 3$.)

\smallskip
More generally, when isotropic Archimedean places are allowed, we obtain an invariance theorem:

\addtocounter{ithmbis}{\value{ithm}}
\addtocounter{ithmbis}{-1}
\begin{ithmbis}[Discrete groups, bis]\label{ithm:Sadic}~\\
Let $K$ be a global field, $\GG$ a connected simple linear $K$-group and $S$ a finite set of valuation classes of $K$ containing the set $S_0$ of all Archimedean ones for which $\GG$ is isotropic.

Let $\Gamma=\GG(K(S))$ be the corresponding $S$-arithmetic group and consider the semi-simple Lie group $L= \prod_{v\in S_0} \GG(K_v)$.

Then $\hb^n(\Gamma) \cong \hbc^n(L)$ for all $n<2 \sum_{v\in S} \mathrm{rank}_{K_v}(\GG)$.
\end{ithmbis}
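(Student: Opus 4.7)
The strategy follows the Borel--Serre paradigm of realising $\Gamma$ as a lattice in a product of algebraic groups over local fields, and then using vanishing of continuous bounded cohomology of the individual factors to reduce to the surviving Archimedean part.

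By Borel~\cite{Borel63} in characteristic zero and Behr--Harder~\cite{Behr69,Harder69} in positive characteristic, the diagonal embedding realises $\Gamma = \GG(K(S))$ as a lattice in
\[
G \;=\; L_c \times L \times G_f,
\]
where $L$ is the semi-simple Lie group of the statement, $L_c$ is a compact group arising from any Archimedean completions where $\GG$ is anisotropic but not in $S_0$, and $G_f = \prod_{v \in S \setminus S_0} \GG(K_v)$ collects the non-Archimedean factors in $S$. Compact factors are invisible to bounded cohomology, so I treat $\Gamma$ as a lattice in $G = L \times G_f$.

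I then apply the standard lattice--induction isomorphism $\hb^n(\Gamma) \cong \hbc^n(G, L^\infty(G/\Gamma))$ and decompose $L^\infty(G/\Gamma) = \RR \oplus \pi$ into constants and their orthogonal complement; simplicity of $\GG$ over $K$ ensures irreducibility of $\Gamma$ in $G$, hence $\pi^G = 0$. For the trivial-coefficient summand, a K\"unneth decomposition of the product $L \times G_f$, combined with \Cref{ithm:algebraic} applied factor by factor to $G_f$, reduces $\hbc^n(G)$ to $\hbc^n(L)$. For the non-trivial summand, I invoke the twice-the-rank vanishing theorem of~\cite{MonodVT}, valid for algebraic groups over local fields with coefficients having no invariants, to conclude $\hbc^n(G, \pi) = 0$ in the range $n < 2\sum_{v \in S} \mathrm{rank}_{K_v}(\GG)$, this sum being the total split rank of $G$.

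\textbf{Main obstacle.} The delicate step is the last one: ensuring that the twice-the-rank bound of~\cite{MonodVT} combines \emph{additively} across the Archimedean factor $L$ and the non-Archimedean factor $G_f$. A naive Hochschild--Serre spectral sequence $E_2^{p,q} = \hbc^p(L, \hbc^q(G_f, \pi))$ gives only a min-type bound $n < 2\min(\mathrm{rank}(L), \mathrm{rank}(G_f))$. Reaching the additive bound requires either a product version of the Solomon--Tits construction underlying~\cite{MonodVT} --- so that the relevant joint building dimension becomes the sum of ranks --- or a finer analysis exploiting the specific ergodic structure of $L^\infty(G/\Gamma)$: using that the decomposition $\pi = \pi^{G_f} \oplus (\pi \ominus \pi^{G_f})$ has $\pi^{G_f}$ invariant-free also as an $L$-module (a consequence of irreducibility, since $(\pi^{G_f})^L \subseteq \pi^G = 0$), one can apply~\cite{MonodVT} twice in succession, once to $G_f$ and once to $L$, to accumulate the two rank bounds.
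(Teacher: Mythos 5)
Your overall architecture --- realise $\Gamma$ as a lattice in a product of local groups, compare $\hb^\bu(\Gamma)$ with the continuous bounded cohomology of the ambient product, then eliminate the non-Archimedean factors using \Cref{ithm:algebraic} --- is the same as the paper's. The final reduction also matches in substance: the paper passes from $\prod_{v\in S_1}\GG(K_v)$ (with $S_1\se S$ the isotropic places) to $L$ by the inflation isomorphism coming from Hochschild--Serre together with \Cref{ithm:algebraic} (Prop.~12.2.1 and~12.2.2(ii) of~\cite{Monod}), which is what your K\"unneth reduction for the trivial-coefficient summand amounts to.

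The genuine gap is exactly where you flag it, and it is not resolved by your sketch. The passage from the ambient product to $\Gamma$ with the \emph{additive} bound $n<2\sum_{v\in S}\mathrm{rank}_{K_v}(\GG)$ cannot be extracted from a one-factor vanishing theorem by ``applying \cite{MonodVT} twice in succession''. Your decomposition $\pi=\pi^{G_f}\oplus(\pi\ominus\pi^{G_f})$ does not accumulate the bounds: for the summand $\pi\ominus\pi^{G_f}$ you only obtain vanishing of $\hbc^q(G_f,\cdot)$ for $q<2\,\mathrm{rank}(G_f)$, and feeding this into a spectral sequence over $L$ leaves the rows $q\geq 2\,\mathrm{rank}(G_f)$ alive in total degrees far below the sum of the ranks; nothing in your argument forces those to vanish, so you land back at the min-type bound you rightly reject. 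The additive range in~\cite{MonodVT} is obtained by a single global construction attached to all factors simultaneously (a product/join of the relevant highly connected complexes, whose connectivity adds), not by iterating the single-factor statement. The paper sidesteps the issue entirely by quoting~\cite[Cor.~1.4]{MonodVT}, which is already stated for irreducible lattices in products and asserts that restriction $\hbc^n\big(\prod_{v\in S_1}\GG(K_v)\big)\to\hb^n(\Gamma)$ is an isomorphism precisely in the range $n<2\sum_{v\in S}\mathrm{rank}_{K_v}(\GG)$, irreducibility of the lattice being supplied by strong approximation. If you insist on rederiving that corollary from the induction module $L^\infty(G/\Gamma)$, you must reprove its product form; your ``main obstacle'' paragraph correctly identifies this as the crux but leaves it open.
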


In general, only the case $n=2$ was previously known: this was the main result of~\cite{Burger-Monod1}.

As a concrete example, given an integer $m>1$, the inclusion of the $S$-arithmetic group $\SL_d(\ZZ[1/m])$ into $\SL_d(\RR)$ induces an isomorphism
\[
\hb^n \big(\SL_d(\ZZ[1/m])\big) \ \cong\ \hbc^n\big(\SL_d(\RR)\big) \kern5mm \forall n< 2 (d-1) (\omega(m) + 1)
\]
where $\omega(m)$ denotes the number of distinct prime factors of $m$. Again, this was previously only known in the special case of $d=2$ by the result for trees~\cite[Cor.~4]{Bucher-Monod_tree}.

We note here that the ordinary (virtual) cohomological dimension of $\SL_d(\ZZ[1/m])$ is $n=(d-1) (\omega(m) + d/2)$ by Borel--Serre~\cite[\S6]{Borel-Serre76}, which lies in our range for $n$ in \Cref{ithm:Sadic} as soon as $m$ has more than $d/2-2$ distinct prime factors.

\begin{irem}[Equivalent formulation]~\\
Theorems~\ref{ithm:Sadic:0} and~\ref{ithm:Sadic} could instead be formulated for general irreducible lattices in semi-simple groups as all results used in the proof hold in that setting. This would however not really add any generality. Indeed, the range for $n$ is void in rank one (since $\hb^1$ always vanishes) and in rank~$\geq2$ Margulis's arithmeticity theorem shows that all lattices are commensurable to $S$-arithmetic groups. We find the above statements more concrete as they focus on the structure of the groups $\Gamma$ under consideration.
\end{irem}

\subsection*{Flatmates and our approach}
With Bucher, we proposed in~\cite{Bucher-Monod_tree} a strategy towards \Cref{ithm:building} and implemented it in the special case of trees. The main difficulty in that strategy is to understand the \emph{flatmate complex}. This object, which will be detailed in \Cref{sec:flatmate} below, consists of all tuples of vertices that lie in a common flat. More precisely, the question raised by Conjecture~10 in~\cite{Bucher-Monod_tree} is to determine the ``uniform homotopy type'' of this complex. We can now answer this problem as follows.

\begin{ithm}[Flatmates]\label{ithm:flatmate}~\\
The flatmate complex of any discrete irreducible Euclidean building is uniformly acyclic.
\end{ithm}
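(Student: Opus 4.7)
The strategy is to build, for each $n\geq 0$, a chain-level contracting homotopy $h_n$ for the augmented chain complex of the flatmate complex whose $\ell^1$-norm is uniformly bounded; this is precisely the statement of uniform acyclicity and feeds directly into the paper's derivation of \Cref{ithm:building}.

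The geometric input is that although the whole building has no global cone point, each individual apartment $A$ is a Euclidean Coxeter complex admitting only finitely many Weyl chambers at infinity. For each such chamber $C\in\mathrm{Ch}(\partial A)$, any flatmate tuple $\sigma=(v_0,\ldots,v_n)\subseteq A$ can be extended by a canonical ``push-out'' vertex $w(\sigma,C)\in A$, namely a distinguished vertex in the non-empty intersection $\bigcap_i Q(v_i,C)$ of the sectors based at each $v_i$ pointing to $C$ (for definiteness, the closest vertex to the supremum of the $v_i$'s in the Weyl partial order determined by $C$). The augmented tuple $(v_0,\ldots,v_n,w(\sigma,C))$ remains flatmate in $A$, and one checks, using the Bruhat--Tits axiom that any two apartments agree on a convex subset containing $\sigma$ together with the push-out region, that $w(\sigma,C)$ is intrinsic to $\sigma$ and to the class of $C$ at infinity, not to the chosen apartment.

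The candidate homotopy is then a signed average over the finitely many chambers at infinity:
\[
h_n(\sigma)\;=\;\frac{(-1)^{n+1}}{|W|}\sum_{C\in\mathrm{Ch}(\partial A)}\bigl(v_0,\ldots,v_n,w(\sigma,C)\bigr),
\]
where $W$ denotes the spherical Weyl group of the building. Since $|W|$ depends only on the Coxeter type and the sum has $|W|$ terms with identical coefficients, $h_n$ is uniformly bounded in $\ell^1$-norm in each degree, independently of $\sigma$.

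The main work is then two-fold. First, one must confirm the apartment-independence of $w(\sigma,C)$ alluded to above; this is a concrete application of the projection/retraction formalism of Bruhat--Tits theory. Second, and crucially, one must verify the chain-homotopy identity $\partial h_n+h_{n-1}\partial=\mathrm{Id}$ up to augmentation. This reduces to an identity between two signed sums, one indexed by boundary faces of $\sigma$ and the other by chambers of $\partial A$, and ultimately to a combinatorial cancellation inside the spherical Coxeter complex. The irreducibility hypothesis of the theorem enters precisely here: it ensures that $\partial A$ is itself irreducible, so the signs dictated by the Coxeter length function interact uniformly with the push-out operation and rule out the degenerate product configurations that would otherwise demand a separate argument. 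This combinatorial identity is, as one should expect, the hard part of the proof.
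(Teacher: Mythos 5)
Your proposal takes the direct route — an explicit geometric contracting homotopy built from cone points inside the apartments — which is precisely the route the paper abandons (the paper instead reduces everything to the nerve of the apartment system via \Cref{thm:nerve:infinite}, proves that nerve uniformly acyclic by the support-control principle of \Cref{thm:univ} together with \Cref{prop:add-flats}, and never writes down a homotopy on the flatmate complex itself). Unfortunately, the direct route as you have set it up has a genuine gap at its core. Compute $\partial h_n(\sigma)+h_{n-1}\partial\sigma$: the last face of each cone $(v_0,\ldots,v_n,w(\sigma,C))$ reproduces $\sigma$ as desired, but the remaining faces are $(v_0,\ldots,\hat v_i,\ldots,v_n,w(\sigma,C))$, whereas $h_{n-1}\partial\sigma$ contributes $(v_0,\ldots,\hat v_i,\ldots,v_n,w(\partial_i\sigma,C))$. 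These cancel only if $w(\partial_i\sigma,C)=w(\sigma,C)$, and with your definition (a distinguished vertex of $\bigcap_i Q(v_i,C)$, e.g.\ the supremum in the Weyl order) the apex genuinely moves when a vertex is deleted from the tuple. Since every term in your $C$-sum carries the \emph{same} coefficient $1/|W|$, there is no alternating structure over chambers at infinity that could absorb this mismatch; the ``combinatorial cancellation'' you defer to the end is not a hard lemma awaiting proof but an identity that is false as written. (This is the standard failure of a cone homotopy whose apex depends on the simplex; it is exactly why the tree case in~\cite{Bucher-Monod_tree} required a more elaborate telescoping construction even in rank one, and why the paper declares the higher-rank combinatorics intractable by this method.)

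There is a second, logically prior, problem: the formula for $h_n(\sigma)$ sums over the chambers of $\partial A$ for an apartment $A$ containing $\sigma$, but a flatmate tuple lies in many apartments, and distinct apartments containing $\sigma$ have genuinely different (typically disjoint away from a common subbuilding) sets of chambers at infinity in the thick building. So even granting that each individual $w(\sigma,C)$ is apartment-independent, the indexing set of the sum is not, and $h_n(\sigma)$ is not well-defined without a choice function $\sigma\mapsto A(\sigma)$; any such choice then assigns different apartments to $\sigma$ and to its faces, compounding the failure of the homotopy identity. Finally, the role you assign to irreducibility (sign bookkeeping via the Coxeter length function) is not the one it plays in the theorem: it is there only so that the building is simplicial rather than polysimplicial. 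To salvage a proof you would need either to correct the homotopy with higher ``prism'' terms interpolating between $w(\sigma,C)$ and the $w(\partial_i\sigma,C)$ — which is the combinatorial explosion the paper avoids — or to switch to the paper's indirect strategy of proving a quantitative filling statement (every $n$ vertices lie in an acyclic subcomplex of controlled size) and letting \Cref{thm:univ} and the uniform nerve theorem do the rest.
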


In the special case of trees, where the flatmate complex is the ``aligned complex'', this statement was established in~\cite{Bucher-Monod_tree} by exhibiting a relatively simple, geometrically meaningful, bounded homotopy. By contrast, in the present case of buildings, the combinatorics of arbitrary configurations of finitely many points seems far too complicated (for the present author). Therefore, we shall prove \Cref{ithm:flatmate} by introducing a few general simplicial tools which will lead to a solution by general principles, commuting back and forth between uniform and non-uniform homotopy arguments.

\smallskip
The new contributions of our approach are as follows. Contrary to ordinary contractibility, Euclidean buildings are not uniformly acyclic. We show in essence that they become so ``modulo their flats'' by considering the nerve of the apartment system. In order to show that this nerve is uniformly acyclic, we use on the one hand that the nerve is \emph{non-uniformly} homotopic to the building, which is \emph{non-uniformly} acyclic. On the other hand, we introduce a \emph{support control} principle which allows us to upgrade the acyclicity of the nerve to its uniform counterpart using a \emph{uniform nerve} principle that we provide. This requires a quantitative control on finite subcomplexes; using building-theoretical arguments, we establish that this control holds in the case of apartment systems (though not for Euclidean buildings themselves).

We expect these tools to be useful beyond the application to algebraic groups.

\smallskip

Regarding the uniform nerve principle, we recall that \emph{topological} nerve theorems fail catastrophically in bounded cohomology. On the one hand, the circle is boundedly acyclic but the nerve of a finite good cover of the circle is not. In the reverse direction, the nerve of a finite good cover of a bouquet of two circles has no cohomology in dimension~$>1$, while this bouquet has infinite-dimensional bounded cohomology in dimensions~$2$ and~$3$.

\begin{merci}[Gratitude]
I am very grateful to Pierre-Emmanuel Caprace and Francesco Fournier-Facio for their comments, and to the anonymous referee whose suggestions and remarks were very helpful, notably in simplifying the statements in \Cref{sec:nerve}.
\end{merci}

\section{Simplicial methods}
\subsection{Notation}
We use the standard notation where a \textbf{simplicial complex} $\Sigma$ is a set of non-empty finite sets closed under passing to non-empty subsets. The set of $q$-simplices is denoted $\Sigma_{q}$; strictly speaking we distinguish $\Sigma_{0}$ from the vertex set $\vertex(\Sigma)=\bigcup_{\sigma\in\Sigma} \sigma$. The \textbf{hypersimplex} $\simplex X$ on a set $X$ is the simplicial complex of all non-empty finite subsets $\sigma\se X$. Note that if $X_1,X_2\se X$ then the subcomplex $\simplex X_1 \cap \simplex X_2$ coincides with $\simplex (X_1 \cap X_2)$. A subcomplex $\Sigma'\se \Sigma$ is \textbf{full} if it contains every $\sigma\in \Sigma$ with $\sigma\se \vertex(\Sigma')$. A \textbf{simplicial map} is a map $f\colon \Sigma \to \Sigma'$ that is induced by a vertex map $\vertex\Sigma \to \vertex\Sigma'$ also abusively denoted by $f$.

\medskip
Given a cover $\sF$ of a set $X$, the \textbf{nerve} $\nerve \sF \se \simplex \sF$ is the simplicial complex of all non-empty finite subsets of $\sF$ having non-empty intersection.

\medskip
Given a \textbf{poset} $S$, that is, a set endowed with a partial order, the corresponding \textbf{order complex} is the simplicial complex $\order S \se \simplex S$ consisting of all non-empty finite chains. Thus an element of $\order S_{q}$ is of the form $\{s_0 < \cdots < s_q\}$. Both isotone and antitone (i.e., order preserving/reversing) poset maps induce simplicial maps since the definition of $\order S$ is self-dual.

Our notation reflects the fact that $\order S$ represents a classifying space of $S$ viewed as a category (compare~\cite{Quillen78}). We warn the reader that $\order S$ is sometimes called a ``nerve'' (and its realisation a classifying space)~\cite{Segal68}; adding to the confusion, the nerve that we defined above is itself a classifying space of a category of inclusions associated to the cover.

Finally, since a simplicial complex $\Sigma$ is a poset under inclusion, we can form its order complex $\order\Sigma$, which is the \textbf{barycentric subdivision} of $\Sigma$.

\medskip
We write $C_q(\Sigma)$ for the group of real-valued $q$-chains of usual simplicial homology. This is a normed vector space for the following norm. A basis of $C_q(\Sigma)$ is obtained by choosing an oriented simplex for every $q$-simplex. Consider the $\ell^1$-norm associated to this basis, i.e., the sum of the absolute values of the coefficients in this basis; this norm does not depend on the choice of orientation since orientations only affect signs. The boundary maps $\partial \colon C_{q+1}(\Sigma)\to C_{q}(\Sigma)$ are augmented by the sum of coefficients $\epsilon \colon C_{0}(\Sigma)\to \RR$.

\medskip
The simplicial complex $\Sigma$ is \textbf{uniformly acyclic} if its (augmented) chain complex admits a contracting (chain-) homotopy $h_\bu$ such that each $h_q\colon C_q(\Sigma)\to  C_{q+1}(\Sigma)$ is bounded (in the sense of linear maps between normed vector spaces). This notion was used in various forms since~\cite{Matsumoto-Morita} and was recently systematically developed in~\cite{Kastenholz-Sroka_arx} in the semi-simplicial setting. Examples include all simplicial cones, in particular any hypersimplex, where the bound on $h_q$ can be chosen to be~$1$. More generally, two simplicial maps are \textbf{uniformly homotopic} if there exists a homotopy between the corresponding chain maps that is bounded in every degree. Accordingly, a \textbf{uniform homotopy equivalence} between $\Sigma$ and $\Sigma'$ is the data of $f\colon \Sigma\to\Sigma'$ and $f'\colon \Sigma'\to\Sigma$ such that $f\circ f'$ and $f'\circ f$ are uniformly homotopic to the identity.

For bounded cohomology (simplicial and beyond), we refer to the founding paper of Gromov~\cite{Gromov} and to~\cite{Frigerio_book,Ivanov_notes_arx_v3}. Uniform acyclicity implies by duality the vanishing of simplicial bounded cohomology (Theorems~2.3 and~2.8 in~\cite{Matsumoto-Morita}), termed \textbf{bounded acyclicity}, though the two are not equivalent.

\subsection{Support control}
The first tool that we introduce is a ``support-to-norm'' principle leveraging the fact that the norm on homology cycles is not just any norm, but a $\ell^1$-norm. This allows to control operator norms on a basis, which is an elementary form of projectivity; in fact, a theorem of K\"othe~\cite{Koethe66} shows that projective Banach spaces are precisely $\ell^1$-spaces.

\begin{thm}\label{thm:univ}
Let $\Sigma$ be a simplicial complex. Suppose that there is a function $\fhi\colon \NN\to\NN$ such that every subcomplex with $n$ vertices is contained in some acyclic subcomplex of $\Sigma$ with at most $\fhi(n)$ vertices.

Then $\Sigma$ is uniformly acyclic.
\end{thm}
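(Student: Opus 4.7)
The plan is to build a bounded contracting chain homotopy $h_\bu$ on the augmented chain complex of $\Sigma$ by induction on degree, exploiting the fact that each $C_q(\Sigma)$ is isometrically the $\ell^1$-space on its basis of oriented $q$-simplices. This is the key ``support-to-norm'' mechanism alluded to in the section's title: to specify a bounded linear map out of $C_q(\Sigma)$, it suffices to specify its values on the basis $\Sigma_q$ and obtain a bound on the $\ell^1$-norm of each value that is uniform over the basis.

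The first preliminary I would establish is a finite-dimensional \emph{filling lemma}: for every $N,q\in\NN$ there is a constant $D_q(N)$ such that any finite acyclic simplicial complex on at most $N$ vertices admits a contracting chain homotopy whose $q$-th map has norm at most $D_q(N)$. This is essentially a compactness statement: there are only finitely many isomorphism classes of simplicial complexes on $\leq N$ vertices, and in finite dimensions each acyclic one has a bounded filling operator for the canonical $\ell^1$-norms. We then take the maximum of these norms over isomorphism classes.

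With the lemma in hand, I would fix a basepoint $v_0\in\vertex(\Sigma)$, set $h_{-1}(1)=v_0$, and construct $h_q$ inductively, maintaining the additional property that, for each $q$-simplex $\tau$, the chain $h_q(\tau)$ is supported in an acyclic subcomplex of $\Sigma$ with at most $N_q$ vertices, where $N_q$ depends only on $q$ and $\fhi$. Given $h_{0},\dots,h_{q-1}$ satisfying $\partial h_p + h_{p-1}\partial = \Id$, for each $q$-simplex $\sigma$ consider
\[
z_\sigma \ :=\ \sigma \;-\; h_{q-1}(\partial\sigma),
\]
which by the inductive identity is a cycle. Its simplex support involves only vertices of $\sigma$ together with vertices of the supporting subcomplexes of the $q+1$ chains $h_{q-1}(\tau)$ for the faces $\tau\subset\sigma$, and so at most $(q+1)(1+N_{q-1})$ vertices in total. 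Applying the hypothesis to this vertex set yields an acyclic subcomplex $\Sigma_\sigma\se\Sigma$ containing them with at most $\fhi((q+1)(1+N_{q-1}))$ vertices; set $N_q:=\fhi((q+1)(1+N_{q-1}))$. The filling lemma produces $h_q(\sigma)\in C_{q+1}(\Sigma_\sigma)$ with $\partial h_q(\sigma)=z_\sigma$ and $\|h_q(\sigma)\|\leq D_q(N_q)\|z_\sigma\|$. Since $\|z_\sigma\|\leq 1+(q+1)\|h_{q-1}\|$ is bounded uniformly in $\sigma$, the $\ell^1$-projectivity observation upgrades the basis-wise bound to $\|h_q\|\leq D_q(N_q)(1+(q+1)\|h_{q-1}\|)<\infty$, completing the induction.

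The subtle point I expect to be the main obstacle is making the filling step really take place inside the subcomplex furnished by the hypothesis. The hypothesis gives an acyclic subcomplex containing the required \emph{vertices}, but $z_\sigma$ involves specific \emph{simplices} of $\Sigma$ (namely $\sigma$ and the simplices occurring in the previously chosen supports of $h_{q-1}(\tau)$) which need not a priori belong to $\Sigma_\sigma$; enlarging $\Sigma_\sigma$ to include them could a priori destroy acyclicity. I expect this to be handled by coordinating the inductive choices, so that each support subcomplex chosen at stage $q-1$ sits inside an ambient acyclic subcomplex at stage $q$, or by a mild reformulation guaranteeing that the acyclic subcomplex can be taken to contain any prescribed finite subcomplex of bounded size. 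Once this bookkeeping is in place, the rest of the argument reduces to the clean combination of $\ell^1$-projectivity with the finite filling lemma.
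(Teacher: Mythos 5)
Your proposal follows the same route as the paper's proof: induction on the degree, defining $h_q$ on the basis of oriented $q$-simplices, filling the cycle $z_\sigma=\dot\sigma - h_{q-1}\partial\dot\sigma$ inside a small acyclic subcomplex supplied by the hypothesis, bounding the filling by a finite-dimensional constant depending only on the vertex count (your $D_q(N)$ is the paper's universal constant $U_q(r)$, obtained there from openness of $\partial$ on finite complexes rather than from a contracting homotopy --- the same compactness over isomorphism types), and propagating a support bound $N_q$ (the paper's $\psi_q$) alongside the norm bound. All of these steps are correct.

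The ``subtle point'' you single out and leave unresolved is a genuine gap, and it is resolved by your second suggested fix, not your first. As literally stated, the hypothesis does not even imply ordinary acyclicity: for the boundary of a triangle on vertices $a,b,c$, the path $a$--$b$--$c$ is an acyclic subcomplex containing all three vertices, so the hypothesis holds with $\fhi(n)=n+3$, yet the complex is a circle. The hypothesis must be read as providing an acyclic subcomplex $\Phi$ that is \emph{full} (it contains every simplex of $\Sigma$ spanned by its vertices); then every simplex in the support of $z_\sigma$, having all its vertices among the $n$ selected ones, automatically lies in $\Phi$, and the filling takes place inside $\Phi$ as required. Your first fix --- coordinating the choices made at stage $q-1$ --- cannot suffice on its own, because the fresh top simplex $\sigma$ must also land in the acyclic subcomplex, and nothing in the unmodified hypothesis guarantees this. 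The fullness reading is exactly what is verified in the intended application: the acyclic subcomplexes produced there are nerves of finite subfamilies of the apartment system, and the nerve of a subfamily is automatically a full subcomplex of the nerve of the whole family. With fullness granted, your induction closes and the argument is complete.
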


When the hypothesis of \Cref{thm:univ} is satisfied, we say that $\Sigma$ has \textbf{support control}. Compare \Cref{defi:support} and \Cref{rem:support} below for further extensions of this definition.

\smallskip
In preparation for the proof, we introduce an auxiliary notion.

\begin{lem}\label{lem:univ}
Given $q,r\in\NN$, there is a constant $U_q(r)$ with the following property. For any simplicial complex $\Phi$ on at most $r$ vertices and for any $(q+1)$-chain $\beta\in C_{q+1}(\Phi)$, there is $\beta'\in C_{q+1}(\Phi)$ with $\partial \beta = \partial \beta'$ and $\|\beta'\| \leq U_q(r) \|\partial\beta\|$.

Moreover, there is a smallest such constant.
\end{lem}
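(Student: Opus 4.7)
The plan is to reduce to a finite-dimensional linear-algebra fact together with a counting argument. For a fixed simplicial complex $\Phi$ on finitely many vertices, the chain groups $C_{q+1}(\Phi)$ and $C_q(\Phi)$ are finite-dimensional normed spaces and $\partial$ is a linear map between them. Writing $B=\partial C_{q+1}(\Phi)$ for its image, endowed with the norm inherited from $C_q(\Phi)$, the claim for $\Phi$ amounts to the inequality
\[
\inf\{\|\beta'\|:\partial\beta'=b\}\ \leq\ U(\Phi)\,\|b\| \kern5mm \forall\, b\in B.
\]
The left-hand side is the quotient norm on $C_{q+1}(\Phi)/\Ker\partial$ evaluated at the class of any preimage of $b$, and via the isomorphism induced by $\partial$ it becomes another norm on the finite-dimensional space $B$. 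Since all norms on a finite-dimensional vector space are equivalent, such a constant $U(\Phi)$ exists. Moreover, each closed affine subspace of a finite-dimensional normed space contains a norm-minimising element (its intersection with a closed ball is compact), so the infimum above is actually attained, producing a bona fide $\beta'$ rather than a mere limit. The smallest admissible $U(\Phi)$ is itself attained as a supremum of a continuous function on the compact unit sphere of $B$.

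Second, I would pass from a fixed $\Phi$ to all complexes on at most $r$ vertices. The normed chain structure depends only on the combinatorial isomorphism type of $\Phi$, so the number $U(\Phi)$ does too. Since there are only finitely many simplicial complexes with vertex set contained in $\{1,\ldots,r\}$, the maximum
\[
U_q(r)\ :=\ \max\Big\{\, U(\Phi):\vertex(\Phi)\se\{1,\ldots,r\}\,\Big\}
\]
is a well-defined real number. Any $\Phi$ on at most $r$ vertices is isomorphic to one in this finite family, by relabelling, so $U_q(r)$ works universally. It is moreover the smallest admissible constant: being the maximum of the individually smallest constants over a finite family, any strictly smaller candidate would fail for a $\Phi$ realising the maximum.

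I do not anticipate any serious obstacle: the content is elementary finite-dimensional linear algebra followed by a finiteness remark. The only subtle point is to notice that in finite dimensions one can replace the infimum defining the quotient norm by an actual minimum, which is precisely what converts the existence of a quotient-norm bound into the existence of the representative $\beta'$ demanded by the statement.
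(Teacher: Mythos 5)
Your proposal is correct and follows essentially the same route as the paper: for each fixed finite complex the bound is an instance of the equivalence of norms (equivalently, openness of $\partial$ onto its image) in finite dimensions, and the uniform constant $U_q(r)$ then comes from the finiteness of isomorphism types of complexes on at most $r$ vertices. Your extra observations — that the quotient-norm infimum is attained so $\beta'$ genuinely exists, and that the optimal constant is achieved — only make explicit what the paper records more briefly.
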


\begin{defi}\label{defi:univ}
This smallest constant will be called the \textbf{universal constant} $U_q(r)$.

(We could choose a coarser constant depending on $r$ only since finitely many $q$ are relevant for every given $r$, but our notation allows the proof of \Cref{thm:univ} to extend to a more general setting, recorded in \Cref{thm:univ-bis} below.)
\end{defi}

\begin{proof}[Proof of \Cref{lem:univ}]
Given any finite simplicial complex $\Phi$, the homology boundary map $\partial\colon C_{q+1}(\Phi) \to C_{q}(\Phi)$ is a linear map between finite-dimensional vector spaces and therefore it is an open map (for any norm, in particular the given $\ell^1$-norm). This means that there is a constant $C$ (depending on $\Phi$ and $q$) such that any boundary $\partial \beta$, where $\beta\in C_{q+1}(\Phi)$, can be written $\partial \beta = \partial \beta'$ for a chain $\beta'$ with $\|\beta'\| \leq C \|\partial\beta\|$.

We define $U_q(r)$ as the infimum of those $C$ that have this property simultaneously for all simplicial complexes $\Phi$ on at most $r$ vertices. This is well-defined since there are only finitely many isomorphism types of such complexes.

We note that the finite dimensionality of $C_{q+1}(\Phi)$ implies also that $U_q(r)$ itself works as a constant $C$ above, despite the use of the infimum.
\end{proof}

\begin{proof}[Proof of \Cref{thm:univ}]
For brevity, we shall say that a chain $\omega\in C_q(\Sigma)$ has \textbf{support at most $m$} if $\omega$ is a linear combination of at most $m$ oriented $q$-simplices. We construct by induction on $q\geq -1$ a sequence $h_q$ of linear maps
\[
\xymatrix{0 & \RR \ar[l] \ar@<0.5ex>[r]^{h_{-1}} & C_0(\Sigma)  \ar@<0.5ex>[l]^\epsilon \ar@<0.5ex>[r]^{h_0} & C_1(\Sigma)  \ar@<0.5ex>[l]^\partial \ar@<0.5ex>[r]^{h_1} & C_{2}(\Sigma)  \ar@<0.5ex>[l]^\partial \ar@<0.5ex>[r]^{h_2} &  C_3(\Sigma)  \ar@<0.5ex>[l]^\partial \ar@<0.5ex>[r]^{h_3} & \cdots \ar@<0.5ex>[l]^\partial}
\]
and a sequence of functions $\psi_q\colon\NN\to\NN$. The inductive claims are:

\begin{enumerate}[(i)]
\item Boundedness: the linear map $h_q$ is bounded.
\item Homotopy: the identity map can be written $h_{q-1}\partial + \partial h_q$ for $q>0$, $h_{-1}\epsilon + \partial h_0$ for $q=0$ and $\epsilon h_{-1}$ for $q=-1$.
\item Support: if $q\geq0$ and $\omega\in C_q(\Sigma)$ has support at most $m\in\NN$, then $h_q(\omega)$ has support at most $\psi_q(m)$.
\end{enumerate}

To start the induction at $q=-1$, we select a vertex $v_0$ and define $h_{-1}(t) = t\{v_0\}$ for $t\in\RR$. We set $\psi_{-1}\equiv 1$. The first two inductive conditions are satisfied. The last one was not formulated for $q=-1$, but in view of the inductive step we note that its \emph{conclusion} still holds since $h_{-1}(t)$ has support at most~$1$.

We now address the inductive step for any $q\geq 0$, abusively writing $\partial$ for $\epsilon$ in the special case $q=0$.

We consider the basis of $C_q(\Sigma)$ given by some choice of an oriented simplex $\dot\sigma$ for every $q$-simplex $\sigma$. We shall first define $h_q$ on each $\dot\sigma$ (but assuming that $h_{q-1}$ is already given on its entire domain of definition).

The second inductive assumption implies that $\alpha=\dot\sigma - h_{q-1}\partial \dot\sigma$ is a cycle because
\[
\partial (h_{q-1}\partial \dot\sigma) = (\partial h_{q-1})\partial \dot\sigma = (\mathrm{Id} - h_{q-2} \partial) \partial \dot\sigma =\partial \dot\sigma
\]
for $q>0$, whereas for $q=0$ we have $\partial h_{-1}\partial \dot\sigma=\partial \dot\sigma$ from $\partial=\epsilon$.

Since $\partial \dot\sigma$ has support at most $q+1$, the third inductive assumption shows that $\alpha$ has support at most $1+\psi_{q-1}(q+1)$. Thus at most $n$ vertices are involved, where
\[
n=(q+1) \big(1+\psi_{q-1}(q+1)\big).
\]
The assumption on $\Sigma$ implies that $\alpha=\partial \beta$ for some $\beta\in C_{q+1}(\Sigma)$ such that $\beta$ (and hence also $\alpha$) is supported on a subcomplex $\Phi\se \Sigma$ with at most $\fhi(n)$ vertices. By \Cref{lem:univ}, upon possibly replacing $\beta$ by another chain in $C_{q+1}(\Phi)$, we can assume that $\beta$ has norm at most $U_q(\fhi(n)) \|\alpha\|$.

We now define $h_q(\dot\sigma)=\beta$ and extend it by linearity to define $h_q$ on all of $C_q(\Sigma)$; this is possible since the various $\dot\sigma$ form a basis. As for $\psi_q$, we define it for $m\in\NN$ by
\[
\psi_q(m) = 
m \cdot \binom{\fhi(n)}{q+2}\, , \kern3mm \text{recalling} \kern3mm n=(q+1) \big(1+\psi_{q-1}(q+1)\big).
\]
Let us proceed to verify the inductive claims for $q$.

For the boundedness condition~(i), let $\omega\in C_q(\Sigma)$. Since $\omega$ is a finite sum of the form $\sum_{\sigma\in\Sigma_{q}} \omega(\sigma) \dot\sigma$, we have
\[
\| h_q(\omega) \| \leq \sum_{\sigma\in\Sigma_{q}}| \omega(\sigma)| \cdot \|h_q( \dot\sigma)\| \leq \Big (\sup_{\sigma\in\Sigma_{q}}  \|h_q( \dot\sigma)\|\Big)  \Big(\sum_{\sigma\in\Sigma_{q}}| \omega(\sigma)|\Big).
\]
Since $\sum_{\sigma\in\Sigma_{q}}| \omega(\sigma)|$ is the $\ell^1$-norm of $\omega$ defined on $C_q(\Sigma)$, it suffices to show that the supremum in the above expression is finite. This follows because the bound obtained above for $\beta$, namely
\[
 \|h_q(\dot\sigma) \|\leq U_q(\fhi(n))\, \|\alpha\| \leq U_q(\fhi(n)) \,\left( \|\dot\sigma\| + \| h_{q-1}\| \cdot \|\partial \|\cdot \|\dot\sigma \| \right),
\]
is independent of $\dot\sigma$, in view of the definition of $n$ and recalling that $\|\dot\sigma\|=1$.

The homotopy condition~(ii) is linear and therefore holds by construction because, on our basis,
\[
\dot\sigma = h_{q-1}\partial \dot\sigma + \alpha = h_{q-1}\partial \dot\sigma + \partial\beta = ( h_{q-1}\partial + \partial h_q) \dot\sigma.
\]
Finally, for the support condition~(iii), let $m\in\NN$ and consider $\omega\in C_q(\Sigma)$ with support at most $m$. Thus $\omega$ is a linear combination of at most $m$ oriented simplices $\dot\sigma$. For each $\dot\sigma$, our construction of $h_q(\dot\sigma)$ is a $(q+1)$-chain on a complex with at most $\fhi(n)$ vertices, and thus it has support at most $\binom{\fhi(n)}{q+2}$. It follows as claimed that $h_q(\omega)$ has support at most $\psi_q(m)$.
\end{proof}

We wrote the above proof in such a way that it shows a formally stronger statement, recorded in \Cref{thm:univ-bis} below. Only the simpler statement of \Cref{thm:univ} will be used in this article; the reader can ignore the rest of this subsection and any mention of semi-simplicial sets.

First we note that the vertex-count $\fhi$, which will be the relevant quantity in our applications to buildings, only served to bound the support, in terms of $(q+1)$-simplices, of a $(q+1)$-chain $\beta$ bounding a $q$-cycle $\alpha$ (whence the binomial coefficients). Therefore we can formalise this in the more general semi-simplicial setting where simplices are not determined by vertices:

\begin{defi}\label{defi:support}
A semi-simplicial set  $(\Sigma_q)_{q\geq 0}$ has \textbf{support control} (below some $\qtop \leq +\infty$) if for every $q<\qtop$ there is a function $\fhi_q\colon \NN\to\NN$ such that every (reduced) $q$-cycle with support at most $m$ is the boundary of a $(q+1)$-chain with support at most $\fhi_q(m)$. (The restriction $\qtop$ on the range can be useful in non-acyclic settings such as the spherical Solomon--Tits theorem.) We call $\fhi_\bu$ the \textbf{control function}.

Accordingly, we replace the universal constants $U_q(r)$ by a semi-simplicial analogue: define the \textbf{semi-simplicial universal constant} $U^{\mathrm{ss}}_q(p)$ by considering all semi-simplicial sets $\Phi_\bu$ such that $\Phi_{q+1}$ has at most $p$ elements and then considering the same constant $C$ as in the proof of \Cref{lem:univ} but for the linear map of finite-dimensional spaces $\partial\colon C_{q+1}(\Phi_\bu) \to C_{q}(\Phi_\bu)$.

In the special case of simplicial complexes, the relation to the earlier constants is thus $U_q(r) \leq U^{\mathrm{ss}}_q (\binom{r}{q+2})$ and $\fhi_q(m) \leq \binom{\fhi(m(q+1))}{q+2}$.
\end{defi}

\begin{exam}
One can check that support control below $\qtop =1$ is equivalent to: connected with finite diameter.
\end{exam}

\begin{rem}\label{rem:support}
The definition of support control is given in terms of \emph{real} cochains. When we prove it for flatmate complexes and algebraic groups, we will however establish the stronger statement that the finite complex (as in \Cref{thm:univ}) is \emph{contractible}, which implies support control for any coefficients. Accordingly, we can speak of \textbf{integral support control} for $\ZZ$ coefficients, etc.
\end{rem}

\begin{thm}[Technical variant]\label{thm:univ-bis}
Let $(\Sigma_q)_{q\geq 0}$ be a semi-simplicial set and let $1 \leq \qtop \leq +\infty$. Suppose that $\Sigma_\bu$ has support control below $\qtop$.

Then $\Sigma_\bu$ admits a bounded contracting homotopy $(h_q)_{q\geq -1}$ up to $q<\qtop$.

In particular, the real simplicial bounded cohomology $\hb^q(\Sigma_\bu)$ vanishes in degrees $0<q<\qtop$ and $\hb^{\qtop}(\Sigma_\bu)$ injects into $\hh^{\qtop}(\Sigma_\bu)$ when $\qtop<+\infty$.
\end{thm}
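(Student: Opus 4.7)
The plan is to mimic the proof of \Cref{thm:univ} in the semi-simplicial setting with only two bookkeeping changes. First, supports are counted directly in $(q+1)$-simplices via the control functions $\fhi_q$, rather than passing through vertex counts and binomial coefficients. Second, the constants $U_q(r)$ are replaced by the semi-simplicial $U^{\mathrm{ss}}_q(p)$; the proof of \Cref{lem:univ} adapts without change, because semi-simplicial sets with $|\Phi_{q+1}|\leq p$ yield only finitely many isomorphism types of the operator $\partial\colon C_{q+1}(\Phi_\bu)\to C_q(\Phi_\bu)$ (the cardinality of $\Phi_q$ being forced through the face maps), so the openness argument still delivers a finite, attained constant. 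The induction is truncated at $q<\qtop$.

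I build by induction on $-1\leq q<\qtop$ linear maps $h_q\colon C_q(\Sigma_\bu)\to C_{q+1}(\Sigma_\bu)$ and support functions $\psi_q\colon\NN\to\NN$ satisfying: (i) $h_q$ is bounded; (ii) $\partial h_q+h_{q-1}\partial=\mathrm{Id}$ on $C_q(\Sigma_\bu)$, with the usual conventions at $q=0,-1$; (iii) if $\omega$ has support at most $m$, then $h_q(\omega)$ has support at most $\psi_q(m)$. The base $q=-1$ uses any $0$-simplex $v_0$: set $h_{-1}(t)=tv_0$ and $\psi_{-1}\equiv 1$. For the step at $q<\qtop$, fix an orientation $\dot\sigma$ on each $\sigma\in\Sigma_q$. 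By (ii) at $q-1$, the chain $\alpha:=\dot\sigma-h_{q-1}\partial\dot\sigma$ is a cycle, and by (iii) it has support at most $n:=1+\psi_{q-1}(q+1)$. Support control below $\qtop$ then supplies $\beta\in C_{q+1}(\Sigma_\bu)$ with $\partial\beta=\alpha$ and support at most $p:=\fhi_q(n)$. The semi-simplicial form of \Cref{lem:univ} lets us adjust $\beta$, keeping it inside the same finite subcomplex, so that $\|\beta\|\leq U^{\mathrm{ss}}_q(p)\,\|\alpha\|$. Define $h_q(\dot\sigma):=\beta$, extend linearly, and set $\psi_q(m):=mp$. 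The three claims follow exactly as in the proof of \Cref{thm:univ}: (i) is uniform in $\dot\sigma$ because $n$ and $p$ depend only on $q$; (ii) is the linear extension of the basis identity; and (iii) combines $m$-linearity with the per-generator support bound $p$.

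For the cohomological consequences, dualise to obtain bounded maps $h_q^{*}\colon C^{q+1}(\Sigma_\bu)\to C^q(\Sigma_\bu)$ satisfying $h_q^{*}\delta+\delta h_{q-1}^{*}=\mathrm{Id}$ on $C^q(\Sigma_\bu)$ for every $q<\qtop$. For a bounded cocycle $c\in C^q(\Sigma_\bu)$ with $0<q<\qtop$, this yields $c=\delta(h_{q-1}^{*}c)$ with $h_{q-1}^{*}c$ bounded, hence $\hb^q(\Sigma_\bu)=0$. At $q=\qtop<+\infty$, the homotopy relation on $C^{\qtop}$ is unavailable since $h_{\qtop}$ has not been constructed, but the relation on $C^{\qtop-1}$ still holds. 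Given a bounded $\qtop$-cocycle $c$ which is an ordinary coboundary $c=\delta f$ with $f\in C^{\qtop-1}$, applying the $(\qtop-1)$-relation to $f$ yields $f=h_{\qtop-1}^{*}c+\delta h_{\qtop-2}^{*}f$; taking $\delta$ of both sides gives $c=\delta(h_{\qtop-1}^{*}c)$, and $h_{\qtop-1}^{*}c$ is bounded. So $c$ is a bounded coboundary, proving injectivity of the comparison map $\hb^{\qtop}(\Sigma_\bu)\to\hh^{\qtop}(\Sigma_\bu)$.

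The only step requiring any real care, which I regard as the main (rather mild) obstacle, is the setup of $U^{\mathrm{ss}}_q(p)$: one must check that a bound on $|\Phi_{q+1}|$ forces a bound on $|\Phi_q|$ via the face maps, so that only finitely many semi-simplicial types of the relevant boundary operator arise and the finite-dimensional argument of \Cref{lem:univ} applies uniformly. Once this is in place, the remainder of the proof transports verbatim from \Cref{thm:univ}.
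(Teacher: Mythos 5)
Your proof is correct and follows essentially the same route as the paper: the paper likewise runs the induction of Theorem~\ref{thm:univ} with supports counted directly in simplices via $\fhi_q$ and with $U^{\mathrm{ss}}_q$ in place of $U_q$, truncating at $q=\qtop-1$, and then invokes the duality results of Matsumoto--Morita for the cohomological consequences, which you instead spell out directly (correctly, including the degree-$\qtop$ injectivity). Your side remark on why only finitely many isomorphism types of boundary operators occur for $U^{\mathrm{ss}}_q(p)$ is a legitimate and correctly resolved point that the paper leaves implicit.
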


\begin{proof}
The inductive proof given for \Cref{thm:univ} holds almost unchanged with the adaptations introduced above. Thus, since $\alpha$ has support at most $1+\psi_{q-1}(q+1)$, it follows that $\beta$ has support at most $\fhi_q(1+\psi_{q-1}(q+1))$ and therefore the inductive definition of $\psi_q$ becomes
\[
\psi_q(m) = m \cdot \fhi_q(1+\psi_{q-1}(q+1)).
\]
The rest of the proof is unchanged.

If $\qtop\neq+\infty$, we stop the inductive argument at $q=\qtop-1$. The statements for bounded cohomology follow by duality; see Theorems~2.3 and~2.8 in~\cite{Matsumoto-Morita}.
\end{proof}

\begin{rem}
This proof shows that in hindsight we can take the control function to be linear for every given $q$ since $\psi_\bu$ is in particular also a control function.
\end{rem}

We believe that there are many circumstances where support control is a helpful method to establish (and strengthen) uniform acyclicity. For instance, it is well-suited to combinatorial arguments such as glueing:

\begin{lem}
Let $(\Sigma_q)_{q\geq 0}$ be a semi-simplicial set and let $1 \leq \qtop \leq +\infty$. Suppose that $\Sigma_\bu$ is the union of two semi-simplicial subsets $\Sigma^+_\bu$, $\Sigma^-_\bu$.

If $\Sigma^+_\bu$ and $\Sigma^-_\bu$ have support control below $\qtop$ and $\Sigma^+_\bu \cap \Sigma^-_\bu$ has support control below $\qtop-1$, then $\Sigma_\bu$ has support control below $\qtop$.

Moreover, the control function for  $\Sigma_\bu$ can be taken to depend only on the control functions for $\Sigma^\pm_\bu$ and $\Sigma^+_\bu \cap \Sigma^-_\bu$.
\end{lem}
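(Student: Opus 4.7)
The plan is to run a chain-level Mayer--Vietoris argument and keep quantitative track of supports throughout. Fix $q<\qtop$ and a $q$-cycle $\alpha\in C_q(\Sigma_\bu)$ of support at most $m$. Partition the oriented $q$-simplices in the support of $\alpha$ into those that lie in $\Sigma^+_\bu$ (with ambiguous ones assigned here by convention) and those remaining in $\Sigma^-_\bu$; this yields a decomposition $\alpha=\alpha^++\alpha^-$ with $\alpha^\pm\in C_q(\Sigma^\pm_\bu)$, each of support at most $m$. Since $\partial\alpha=0$, the chain $\gamma:=\partial\alpha^+=-\partial\alpha^-$ is supported simultaneously in $\Sigma^+_\bu$ and in $\Sigma^-_\bu$, hence in their intersection, and is a $(q-1)$-cycle of support at most $(q+1)m$.

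Because $q<\qtop$ forces $q-1<\qtop-1$, I apply the support control of $\Sigma^+_\bu\cap\Sigma^-_\bu$ in degree $q-1$ to find $\delta\in C_q(\Sigma^+_\bu\cap\Sigma^-_\bu)$ with $\partial\delta=\gamma$ and support at most $\fhi^\cap_{q-1}\bigl((q+1)m\bigr)$, where $\fhi^\cap_\bu$ denotes the control function of the intersection. Rebalancing produces $\tilde\alpha^+:=\alpha^+-\delta$ and $\tilde\alpha^-:=\alpha^-+\delta$, which are now $q$-cycles in $\Sigma^+_\bu$ and $\Sigma^-_\bu$ respectively (as $\delta$ lies in both), still summing to $\alpha$, and each of support at most $m+\fhi^\cap_{q-1}\bigl((q+1)m\bigr)$. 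Applying the support control of $\Sigma^\pm_\bu$ below $\qtop$ to $\tilde\alpha^\pm$ yields $\beta^\pm\in C_{q+1}(\Sigma^\pm_\bu)$ with $\partial\beta^\pm=\tilde\alpha^\pm$ and support bounded by $\fhi^\pm_q$ of the previous quantity. The sum $\beta:=\beta^++\beta^-$ then satisfies $\partial\beta=\alpha$ with support controlled by a function of $m$ depending solely on the three given control functions, as demanded by the moreover clause.

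The only genuinely delicate case is $q=0$, where $\gamma$ degenerates into the augmentation scalar $\epsilon(\alpha^+)=-\epsilon(\alpha^-)$ and one must exhibit a vertex $v\in\Sigma^+_\bu\cap\Sigma^-_\bu$ in order to set $\delta=\epsilon(\alpha^+)\{v\}$. For $\qtop\geq 2$ this is automatic since support control of the intersection below $\qtop-1\geq 1$ forces non-emptiness, via the example following \Cref{defi:support}; in the boundary case $\qtop=1$ the same construction still applies provided the two pieces meet, which is the pertinent content of the hypothesis in that range. Apart from this low-degree bookkeeping, the argument is essentially formal, and I expect no substantive obstacle beyond writing the support estimate cleanly.
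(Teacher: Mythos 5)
Your proof is correct and follows essentially the same route as the paper: decompose $\alpha$ into chains on $\Sigma^\pm_\bu$, observe that their common boundary is a $(q-1)$-cycle on the intersection, correct by a bounding chain there, and then bound the two resulting cycles separately, tracking supports at each step. Your explicit handling of the $q=0$ augmentation case is in fact slightly more careful than the paper's one-line argument, which leaves that degree implicit.
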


\noindent
If the support is replaced by the norm, then the analogous statement is given in~\cite[7.13]{Kastenholz-Sroka_arx}.

\begin{proof}
A $q$-cycle $\alpha$ on $\Sigma _\bu$ can be written $\alpha^+ - \alpha^-$ for $\alpha^\pm\in C_q(\Sigma^\pm_\bu)$ without introducing new simplices in the supports. Then $\partial \alpha^+ = \partial \alpha^-$ is a $(q-1)$-cycle on  $\Sigma^+_\bu \cap \Sigma^-_\bu$. If $\partial \alpha^\pm = \partial \omega$ for $\omega\in C_q(\Sigma^+_\bu \cap \Sigma^-_\bu)$, then $\alpha^\pm - \omega$ is a $q$-cycle on $\Sigma^\pm_\bu$ and hence can be written $\partial \beta^\pm$ for $\beta^\pm\in C_{q+1}(\Sigma^\pm_\bu)$. By assumption, the supports of $\omega$ and $\beta^\pm$ can be bounded in terms of the support of $\alpha$. Finally, $\alpha= \partial (\beta^+ - \beta^-)$.
\end{proof}

\subsection{Uniform nerve principles}
\label{sec:nerve}%
Leray established the classical correspondence between the homotopy type of a space and that of the nerve of a good cover; a simplicial version is due to Borsuk. Our next tool is a uniform version of the simplicial nerve lemma. We write $\sub(\Sigma)$ for the \textbf{poset of subcomplexes} of a simplicial complex $\Sigma$. In order to facilitate the control of the constants, we make a strong assumption on the subcomplexes, which will be granted in our applications. We begin with a statement for finite covers (of generally infinite complexes).

\begin{thm}\label{thm:nerve}
Let $\Sigma$ be a simplicial complex and let $\sF \se \sub(\Sigma)$ be a finite cover of $\Sigma$ by subcomplexes.

Suppose that every element of $\sF$ is a hypersimplex.

Then $\Sigma$ is uniformly homotopy equivalent to the nerve complex $\nerve \sF$. Moreover, all bounds can be chosen independently of $\Sigma$ and $\sF$, i.e., they depend only on the homology degree.
\end{thm}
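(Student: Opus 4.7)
The plan is to make the classical Čech proof of the nerve lemma quantitative, exploiting the fact that every intersection of elements of $\sF$ is a full simplex. First I would form the bicomplex
\[
B_{p,q} \;=\; \bigoplus_{T\in\nerve\sF_{p}}\, C_q\!\Big(\bigcap T\Big)
\]
with horizontal Čech differential $d_h$ (alternating sum over the face inclusions $T'\subseteq T$ with $|T'|=|T|-1$) and vertical simplicial boundary $d_v$. Its total complex carries canonical augmentations onto $C_*(\Sigma)$, mapping the $p=0$ column by the inclusions $\bigcap T \hookrightarrow \Sigma$ with Čech signs, and onto $C_*(\nerve\sF)$, augmenting each $C_0(\bigcap T)$ to $\RR$ and identifying the $q=0$ row with $C_*(\nerve\sF)$.

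Next I would establish two uniform acyclicities. Every column $C_*(\bigcap T)$ is the chain complex of a full simplex, hence a simplicial cone, hence admits a contracting homotopy of norm $1$. For the horizontal direction, fix a simplex $\sigma\in\Sigma$ and put $A(\sigma)=\{F\in\sF:\sigma\in F\}$; this set is non-empty since $\sF$ covers $\Sigma$, and $\sigma$ lies in the intersection of any non-empty subset of $A(\sigma)$. Hence every non-empty $T\subseteq A(\sigma)$ belongs to $\nerve\sF$, and the ``horizontal slice'' of $B_{*,*}$ picking out $\sigma$ from each $C_q(\bigcap T)$ is precisely the augmented chain complex of the full simplex $\Delta A(\sigma)$, again uniformly contractible with constant $1$.

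With both a vertical and a horizontal uniform contraction of norm $1$ available, the standard zig-zag (``tic-tac-toe'') construction for double complexes produces explicit bounded chain equivalences
\[
C_*(\Sigma)\;\simeq\; \mathrm{Tot}(B_{*,*})\;\simeq\; C_*(\nerve\sF)
\]
together with bounded chain homotopies. In homological degree $q$, the zig-zag involves at most $q+1$ alternations between the elementary contractions, and each horizontal or vertical differential has norm at most $q+2$, so all operator norms are bounded by a function of $q$ alone, independent of both $\Sigma$ and $\sF$.

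The main obstacle is purely the bookkeeping: tracking the norm growth through the zig-zag carefully enough to confirm that the bounds depend only on the homology degree. The full-simplex hypothesis does the conceptual work by supplying both fiber acyclicities, along $\nerve\sF$ and along $\Sigma$, with the optimal contraction constant $1$ irrespective of the possibly infinite cardinality of $\bigcap T$ or of $A(\sigma)$. The finiteness of $\sF$ is used only so that the Čech sums are finite, not in any of the bounds themselves.
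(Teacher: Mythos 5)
Your argument is correct, but it takes a genuinely different route from the paper. You run the classical \v Cech double complex $B_{p,q}=\bigoplus_{T\in\nerve\sF_p}C_q(\cap T)$ and observe that the full-simplex hypothesis hands you \emph{both} fibrewise contractions with norm $1$: the columns because each $\cap T$ is a cone, and the row slices because, for a fixed $\sigma$, the relevant subsets $T$ are exactly the non-empty subsets of $A(\sigma)=\{F\in\sF:\sigma\in F\}$, so the slice is the augmented chain complex of the full simplex on $A(\sigma)$ (and the $\ell^1$-norm splits over slices, so the slicewise cones assemble to a norm-$1$ contraction of the whole row). The acyclic-assembly zig-zag then yields bounded chain equivalences $C_*(\Sigma)\simeq\mathrm{Tot}(B_{*,*})\simeq C_*(\nerve\sF)$ with all operator norms controlled by the homological degree alone, which is exactly the chain-level statement the paper needs. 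The paper instead constructs explicit antitone poset maps $f(\sigma)=\{F:\sigma\in F\}$ and $g(\beta)=\{x_\alpha:\beta\se\alpha\}$, views them as simplicial maps between the barycentric subdivisions $\order\Sigma$ and $\order\nerve\sF$, and compares $gf$ and $fg$ with the identity via the uniform carrier lemma, the carrier $\order(\cap f(\sigma_p))$ being uniformly acyclic because it is again (a subdivision of) a full simplex. Your route is more self-contained at the chain level (no carrier lemma, no subdivision comparison) and makes the degree-only bounds transparent; the paper's route buys concrete simplicial maps whose explicit form is reused in the proof of \Cref{thm:nerve:infinite}, where the equivalences attached to nested finite subcovers $\sF'\se\sF''$ are compared via $f'(\sigma)\se f''(\sigma)$. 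If your construction were adopted there, you would need the analogous (true, but not free) naturality of the \v Cech equivalences under enlarging the cover, e.g.\ by choosing the elements $F_\sigma\in A(\sigma)$ consistently.
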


Note that the assumption implies that $\cap \alpha$ is also a hypersimplex in $\Sigma$ for every $\alpha\in \nerve \sF$.

The overall structure of the argument is similar to a strategy used for ordinary nerve principles such as in~\cite[Lem.~1.1]{Bjorner81}. One ingredient is the \emph{carrier lemma}, for which a uniform version was established in~\cite{FFF-monod-nariman_arx1} (in the greater generality of semi-simplicial sets). Recall that given simplicial complexes $\Omega, \Omega'$ a \textbf{carrier} is an isotone map $C\colon \Omega\to \sub(\Omega')$; it is \textbf{uniformly acyclic} if for each $q$, every complex $C(\sigma)$ is uniformly acyclic, uniformly over $\sigma\in\Omega_{q}$. A simplicial map $\fhi\colon \Omega \to \Omega'$ is \textbf{carried} by $C$ if $\forall \sigma : \fhi(\sigma )\in C(\sigma)$.

\begin{lem}[\cite{FFF-monod-nariman_arx1}]\label{lem:carrier}
If $\fhi, \psi\colon \Omega \to \Omega'$ are carried by the same uniformly acyclic carrier, then they are uniformly homotopic with constants depending only on the carrier and the homology degree.

In particular, if $\fhi, \psi\colon S\to S'$ are two isotone (or two antitone) poset maps with $\forall s: \fhi(s) \leq \psi(s)$, then the corresponding simplicial maps $\order S \to \order S'$ are uniformly homotopic with constants depending only on the homology degree.
\end{lem}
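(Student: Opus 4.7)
The plan is to mimic the classical acyclic carrier theorem and construct the homotopy $H_\bu$ inductively on the simplex dimension of the domain, using the uniform acyclicity of each $C(\sigma)$ to bound each step and the $\ell^1$-structure of chain spaces to pass from basis bounds to operator bounds.

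Setup the induction. I would build linear maps $H_q\colon C_q(\Omega)\to C_{q+1}(\Omega')$ satisfying $\partial H_q + H_{q-1}\partial = \psi_\bu - \fhi_\bu$ with $\|H_q\|$ bounded by a constant $M_q$ depending only on the uniform acyclicity constants of $C$ up to degree $q$. Fix for each $\sigma\in\Omega_q$ an orientation $\dot\sigma$; it suffices, by the same $\ell^1$-argument used in the proof of \Cref{thm:univ}, to bound $\|H_q(\dot\sigma)\|$ uniformly in $\sigma$. For the base case one can take $H_{-1}=0$, or, equivalently, define $H_0$ directly from vertices: for a vertex $v$, the $0$-chain $\psi(v)-\fhi(v)$ is augmented to zero and lives in $C(v)$, hence is the boundary of a $1$-chain in $C(v)$ with norm controlled by the uniform acyclicity of $C(v)$.

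Inductive step. Assuming $H_0,\ldots,H_{q-1}$ are defined, bounded, and supported correctly, consider $\alpha_\sigma := \psi_\bu(\dot\sigma) - \fhi_\bu(\dot\sigma) - H_{q-1}(\partial\dot\sigma)$. A direct computation using the homotopy identity at level $q-1$ shows that $\partial\alpha_\sigma =0$. Moreover, $\psi_\bu(\dot\sigma)$ and $\fhi_\bu(\dot\sigma)$ lie in $C(\sigma)$ by the definition of carried map, while for each face $\tau$ of $\sigma$ we have $H_{q-1}(\dot\tau)\subseteq C(\tau)\subseteq C(\sigma)$ by the inductive support claim and the isotonicity of $C$; hence $\alpha_\sigma$ is a $q$-cycle in $C(\sigma)$. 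By uniform acyclicity of $C$ in degree $q$, there is a constant $K_q$, independent of $\sigma$, and an explicit bounded contracting homotopy $h^{C(\sigma)}_q$ for $C(\sigma)$ with norm at most $K_q$; set $H_q(\dot\sigma) := h^{C(\sigma)}_q(\alpha_\sigma)$ and extend by linearity. Then $\partial H_q(\dot\sigma)=\alpha_\sigma$, which rearranges to the homotopy identity on the basis, and $\|H_q(\dot\sigma)\|\leq K_q \|\alpha_\sigma\|\leq K_q(2 + \|\partial\|\,M_{q-1})$, giving a uniform bound $M_q$. The support control on $H_q(\dot\sigma)\subseteq C(\sigma)$ is preserved, feeding the next induction step. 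The $\ell^1$-to-operator passage is exactly the one carried out in the proof of \Cref{thm:univ}.

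For the ``in particular'' clause, I would exhibit a canonical uniformly acyclic carrier for the pair of poset maps: for $\sigma=\{s_0<\cdots<s_q\}\in\order S$ in the isotone case, set
\[
C(\sigma)\ :=\ \order\big([\fhi(s_0),\psi(s_q)]\big)\ \se\ \order S',
\]
where the interval is taken in $S'$. Isotonicity of $\fhi,\psi$ and the pointwise inequality $\fhi\le\psi$ ensure that $\fhi(\sigma)$ and $\psi(\sigma)$ both sit in this interval, so both maps are carried; isotonicity of $C$ itself follows from $\fhi(s_0)\le\fhi(s_{i_0})$ and $\psi(s_{i_k})\le\psi(s_q)$ for any subchain. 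Uniform acyclicity is free because the interval has a minimum element $\fhi(s_0)$, so coning on this vertex provides a contracting homotopy of norm $\leq 1$. The antitone case is symmetric, and the case of mixed types can be absorbed by passing to the opposite poset. Applying the first part of the lemma then yields a bounded homotopy with constants depending only on the degree.

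The main obstacle. The only delicate point is keeping the bound $M_q$ genuinely uniform over all $\sigma\in\Omega_q$: this requires both that the uniform acyclicity constants of the carrier be uniform in $\sigma$ (which is precisely the hypothesis) and that the support-preservation of $H_{q-1}$ be maintained so that $\alpha_\sigma$ remains in $C(\sigma)$ and the contracting homotopy of $C(\sigma)$ can be applied to it. Once this support bookkeeping is tracked through the induction, the rest of the argument is purely formal.
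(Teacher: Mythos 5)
Your proof is correct and follows essentially the route the paper indicates: the paper only cites Lemma~4.11 and Theorem~4.12 of the reference, and those amount to exactly your quantitative version of the classical acyclic carrier theorem (inductive construction of $H_q$ on a basis, with the support bookkeeping $H_q(\dot\sigma)\in C_{q+1}(C(\sigma))$ and the $\ell^1$-passage to operator norms), together with a cone-valued carrier for the poset case. Your interval carrier $\order\big([\fhi(s_0),\psi(s_q)]\big)$, coned off at its minimum, is precisely the ``cone as carrier'' the paper's reference-proof alludes to, so there is nothing to add.
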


\begin{proof}[Reference for the proof]
The first statement is (the simplicial case of) Lemma~4.11 in~\cite{FFF-monod-nariman_arx1}. The second one is a version of Theorem~4.12 therein and follows from the first by exhibiting a cone as carrier.
\end{proof}

\begin{proof}[Proof of \Cref{thm:nerve}]
Considering  $\Sigma$ and $\nerve \sF$ as posets, we define an antitone map
\[
f\colon \Sigma \lra \nerve \sF,\kern3mm f(\sigma) = \{ F\in \sF : \sigma \in F\}.
\]
Next, for every $\alpha\in\nerve \sF$, we choose some vertex $x_\alpha$ of $\cap \alpha$. Define
\[
g\colon \nerve \sF \lra\Sigma,\kern3mm g(\beta) = \big \{ x_\alpha : \alpha\in\nerve\sF \text{ with } \beta \se \alpha\big\}.
\]
Note that $g(\beta)$ is indeed a simplex of $\Sigma$ since all those $x_\alpha$ are in $\cap\beta$, which is a hypersimplex. The map $g$ is an antitone map of posets.

We can now consider $f$ and $g$ as simplicial maps between the corresponding order complexes $\order\Sigma$ and $\order\nerve\sF$, which are none other than the barycentric subdivisions of $\Sigma$ and $\nerve\sF$. We claim that these simplicial maps are uniform homotopy inverses to each other with all bounds depending only on the homology degree. This claim will complete the proof of the theorem because any simplicial complex is uniformly homotopy equivalent to its barycentric subdivision with bounds depending only on the homology degree. Indeed, the classical homotopy equivalences are given by explicit sums depending only on the degree; see e.g., \cite[\S17]{Munkres}. (This fact has also been established in~\cite[\S7.7]{Kastenholz-Sroka_arx} for the generality of semi-simplicial sets.)

Turning to the claim, we consider the composition $g f$. This is an isotone map on the poset $\Sigma$ (the vertex set of $\order\Sigma$). We have
\[
g f (\sigma) = \big\{ x_\alpha : \alpha\in\nerve\sF \text{ such that } \forall F\in \sF, \sigma\in F \Rightarrow F \in \alpha \big\}
\]
and $g f (\sigma)$ is a simplex of the subcomplex $\cap f(\sigma)$ of $\Sigma$. Note that the map which to each $\sigma$ associates $\cap f(\sigma)$ is a carrier from $\Sigma$ to itself. We define a further carrier map
\[
C\colon \order\Sigma\lra \sub( \order\Sigma), \kern3mm C(\{\sigma_0 \subsetneqq \cdots \subsetneqq \sigma_p\}) = \order \big(\cap f(\sigma_p)\big).
\]
This carrier $C$ carries the simplicial map $g f$; indeed:
\[
g f (\{\sigma_0 \subsetneqq \cdots \subsetneqq \sigma_p\}) =  \{g f(\sigma_0) \se \cdots \se g f(\sigma_p)\} \in \order \big(\cap f(\sigma_p)\big)
\]
since each  $g f(\sigma_j)$ is in $\cap f(\sigma_j)$ which is a subset of $\cap f(\sigma_p)$. On the other hand, $C$ also carries the identity because $\sigma_p\in\cap f(\sigma_p)$ by definition of $f$. In order to conclude from the uniform carrier lemma that $g f$ and the identity are uniformly homotopy equivalent with the desired uniformity of constants, it remains only to justify that the carrier $C$ is uniformly acyclic with constants depending only on the degree. But this last point follows from the fact that the carrier is the barycentric subdivision of $\cap f(\sigma_p)$, which is a hypersimplex.

We now consider the other composition, $f g$, which is simpler. We have
\[
f g(\beta) = \big\{ F\in \sF: \forall \alpha\in\nerve\sF, \beta\se\alpha \Rightarrow x_\alpha\in F \big\}.
\]
Unravelling all definitions, we see that $\beta \se f g (\beta)$. In other words, $f g$ dominates the identity (as poset maps); therefore, the uniform carrier lemma applies.
\end{proof}

The covers that will appear in the proof of our main result are not finite, in fact not even locally finite (they have locally the power of continuum), but turn out to have uniformly acyclic nerves. Therefore, we shall need the following variant of \Cref{thm:nerve}. We caution the reader that a difficulty resides in the fact that successive nested finite subcovers will a priori give distinct homotopy equivalence maps; we will argue that they must be uniformly homotopic to each other.

\begin{thm}\label{thm:nerve:infinite}
Let $\Sigma$ be a simplicial complex and let $\sF \se \sub(\Sigma)$ be a cover of $\Sigma$ by subcomplexes.

Suppose that every element of $\sF$ is a hypersimplex.

Then $\Sigma$ is uniformly acyclic if and only if the nerve $\nerve \sF$ is so.
\end{thm}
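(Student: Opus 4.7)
The plan is to reduce to the already-proved finite nerve principle (\Cref{thm:nerve}) by applying it to successively larger finite subfamilies of $\sF$, using the carrier lemma as the bridge between the chain equivalences that each invocation of \Cref{thm:nerve} produces. I will describe the argument for the non-trivial direction, namely that uniform acyclicity of $\nerve\sF$ implies that of $\Sigma$; the converse is entirely symmetric by swapping the roles of the two complexes.

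Given a $q$-cycle $\alpha\in C_q(\Sigma)$ of norm $m$, I first select, for each oriented simplex in the support of $\alpha$, some element of $\sF$ containing it, producing a finite subfamily $\sF_0\se\sF$ of cardinality at most $m$. Its union $\Sigma_0\se\Sigma$ is a finite subcomplex containing the support of $\alpha$, and the finite cover $\sF_0$ of $\Sigma_0$ still has full-simplex intersections, so \Cref{thm:nerve} applies and gives uniformly bounded chain equivalences $f_0\colon C_\bu(\Sigma_0)\to C_\bu(\nerve\sF_0)$ and $g_0\colon C_\bu(\nerve\sF_0)\to C_\bu(\Sigma_0)$ with bounds depending only on $q$. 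Pushing $\alpha$ forward yields a cycle $f_0(\alpha)\in C_q(\nerve\sF_0)\se C_q(\nerve\sF)$ of norm $O(m)$, and the assumed uniform acyclicity of $\nerve\sF$ produces $\gamma\in C_{q+1}(\nerve\sF)$ of norm $O(m)$ with $\partial\gamma=f_0(\alpha)$. Since $\gamma$ has finite support, I then enlarge $\sF_0$ to a finite subfamily $\sF'\supseteq\sF_0$ containing every vertex appearing in $\gamma$, with union $\Sigma'\supseteq\Sigma_0$, and apply \Cref{thm:nerve} once more to obtain a second pair of uniformly bounded chain equivalences $f'$, $g'$ for the finite cover $\sF'$ of $\Sigma'$.

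The decisive comparison is then between $g'(\gamma)\in C_{q+1}(\Sigma')$ and a chain bounding $\alpha$: a priori, $\partial g'(\gamma)=g'\iota f_0(\alpha)$, where $\iota\colon\nerve\sF_0\hookrightarrow\nerve\sF'$ is the inclusion, and one has no reason to expect this to equal $\alpha$. However, viewed as antitone poset maps $\Sigma_0\to\nerve\sF'$, both $\iota f_0$ and $f'|_{\Sigma_0}$ satisfy $\iota f_0(\sigma)\se f'(\sigma)$, simply because $\sF_0\se\sF'$ and the two maps are defined by the same recipe $\sigma\mapsto\{F:\sigma\in F\}$ with respect to different subfamilies. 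The second part of \Cref{lem:carrier} then provides a bounded homotopy between the two, and composing with $g'$ together with the bounded homotopy $g'f'\sim\Id$ supplied by \Cref{thm:nerve} gives $g'\iota f_0(\alpha)=\alpha+\partial\delta$ for some $\delta\in C_{q+1}(\Sigma')$ with $\|\delta\|=O(m)$. Hence $\alpha=\partial\bigl(g'(\gamma)-\delta\bigr)$, showing that every $q$-cycle in $\Sigma$ bounds a $(q+1)$-chain of proportional norm. Running the inductive construction from the proof of \Cref{thm:univ} basis-element-wise then assembles a genuine bounded contracting homotopy.

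The main obstacle is precisely the matching in the third paragraph: the construction of $f$ and $g$ in \Cref{thm:nerve} involves arbitrary choices of vertices $x_\alpha\in\cap\alpha$, so the maps attached to $\sF_0$ and $\sF'$ do not agree in any naive sense, and the statement of \Cref{thm:nerve} only asserts that each equivalence is individually bounded, not any functoriality in the cover. What makes the proof work is that the carrier lemma extracts the required homotopy from the purely combinatorial containment $\iota f_0(\sigma)\se f'(\sigma)$ without ever mentioning the vertex choices, so the compatibility between successive nested finite subcovers comes for free.
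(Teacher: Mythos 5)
Your argument is correct and follows essentially the same route as the paper: pass to a finite subcover carrying the cycle, fill its image in the uniformly acyclic nerve, enlarge to a second finite subcover carrying the filling, and reconcile the two applications of \Cref{thm:nerve} via the containment $f_0(\sigma)\se f'(\sigma)$ of the underlying poset maps together with \Cref{lem:carrier}. The paper's proof is the same comparison of nested finite subcovers, including the observation that the forward maps $f$ are compatible without reference to the vertex choices $x_\alpha$ (which only matter for the converse direction, where one fixes them once and for all).
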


\begin{proof}
Suppose that $\nerve \sF$ is uniformly acyclic. Fix $q\in \NN$ and consider any cycle $\omega\in C_q(\Sigma)$. Then there is a finite subset $\sF'\se\sF$ which covers all simplices involved in $\omega$. We consider the subcomplex $\Sigma'$ of $\Sigma$ given by the union of $\sF'$ and view $\omega$ as a cycle on  $\Sigma'$. The homotopy equivalence of \Cref{thm:nerve}, applied to $\Sigma'$,  means that there are bounded linear chain maps  $C_q(\Sigma') \to C_q(\nerve\sF')$ and in the opposite direction which induce mutually inverse isomorphisms in homology. Consider the cycle $\eta\in C_q(\nerve\sF')$ corresponding to $\omega$ as a cycle on $\nerve\sF$. The uniform acyclicity assumption implies that we can write $\eta = \partial \vartheta$ for $\vartheta\in C_{q+1}(\nerve\sF)$ with $\|\vartheta\|\leq c \|\eta\|$, where $c$ depends only on $q$ and on $\nerve\sF$.

Again $\vartheta$ is supported on $\nerve\sF'' \se \nerve\sF$ for some finite $\sF'' \se\sF$ and we can assume that $\sF''$ contains $\sF'$. We apply again \Cref{thm:nerve}, this time to $\Sigma''=\cup\sF''$, noting that $\omega$ is also a cycle for this complex. The resulting chain map $C_q(\Sigma'') \to C_q(\nerve\sF'')$ sends $\omega$ to some cycle $\widehat\eta\in C_q(\nerve\sF'')$. We claim that $\widehat\eta -\eta = \partial \fhi$ for some $\fhi\in C_{q+1}(\nerve\sF'')$ with $\|\fhi\|\leq c' \|\omega\|$, where $c'$ depends only on $q$, on $\Sigma$ and on $\nerve\sF$. This claim will finish the proof that $\Sigma$ is uniformly acyclic, since $\widehat\eta = \partial (\vartheta+\fhi)$ will then imply that $\omega$ is a boundary (in $\Sigma''$) and since all constants (including those from \Cref{thm:nerve}) depend on $q$, $\Sigma$ and $\sF$ only, not on $\omega$.

To justify the claim, we need to compare the two maps
\[
C_q(\Sigma') \lra C_q(\nerve\sF') \ \se\ C_q(\nerve\sF'') \kern2mm \text{and}\kern2mm C_q(\Sigma')  \ \se\  C_q(\Sigma'') \lra C_q(\nerve\sF'') 
\]
which produce the cycles $\eta$, respectively, $\widehat\eta$, from $\omega$. It suffices to show that these maps are homotopic with uniform constants.

Consider the underlying two poset maps
\[
f'\colon \Sigma' \lra \nerve\sF' \ \se\ \nerve\sF'' \kern2mm \text{and}\kern2mm f''\colon \Sigma' \ \se\  \Sigma'' \lra \nerve\sF''
\]
which are constructed in the proof of \Cref{thm:nerve}. Strictly speaking, the first one is the corestriction of the map $f$ constructed for $\Sigma'$, the second the restriction of the map $f$ for $\Sigma''$. Appealing again to the uniform carrier lemma in the form of \Cref{lem:carrier}, it suffices to show the following: for every $\sigma\in\Sigma'$, $f'(\sigma) \se f''(\sigma)$. This, however, is apparent in the definition given for $f$ in the proof of \Cref{thm:nerve}.

\smallskip
The converse, which we will not need, is proved in exactly the same way. Namely, given finite subcovers $\sF' \se \sF'' \se\sF$ and the corresponding subcomplexes $\Sigma' \se \Sigma''$, it suffices to compare the two poset maps 
\[
g'\colon \nerve\sF' \lra \Sigma'  \ \se\  \Sigma'' \kern2mm \text{and}\kern2mm g''\colon  \nerve\sF' \ \se\ \nerve\sF'' \lra  \Sigma'' 
\]
arising from the proof of \Cref{thm:nerve}. If the choice $\alpha\mapsto x_\alpha$ has been fixed once and for all for every $\alpha\in \nerve\sF$, then indeed $g'(\beta) \se g''(\beta)$ holds for all $\beta\in\nerve\sF'$ and we conclude as above.
\end{proof}


\section{The flatmate complex of Euclidean buildings}
\subsection{Euclidean buildings}
\label{sec:affine}%
We shall adopt the viewpoint that a building is a complex endowed with a system of apartments and refer to~\cite{Abramenko-Brown}, \cite{Ronan_buildings2} and~\cite{Weiss_affine} for background. For simplicity, we only consider \emph{irreducible} buildings, which are therefore simplicial (rather than polysimplicial) complexes. All arguments below adapt to the non-irreducible case, but this setting is not needed for our vanishing results because, as we shall recall in \Cref{sec:vanishing}, the vanishing passes to finite products of groups (and from finite index subgroups).

\smallskip
In the case of discrete irreducible \emph{Euclidean} buildings, each apartment is a full subcomplex isomorphic to a triangulation of a Euclidean space, making the synonym `flat' especially congruous. We recall that the apartments are \textbf{combinatorially convex}, which means by definition that every minimal gallery connecting two chambers of an apartment remains in that apartment; see Prop.~4.40 in~\cite{Abramenko-Brown}.

\begin{prop}\label{prop:add-flats}
Given a discrete irreducible Euclidean building, there exists an integer $k$ with the following property.

For every $n\in \NN$ and every family of $n$ apartments $F_1, \ldots, F_n$, there exists a family of $k n$  apartments $E_1, \ldots, E_{k n}$ such that the union
\[
F_1 \cup \cdots \cup F_n \cup E_1 \cup \cdots \cup E_{k n}
\]
is contractible (as a simplicial complex).
\end{prop}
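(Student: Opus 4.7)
The plan is to argue by induction on $n$ with a strengthened inductive hypothesis that remembers a fixed base chamber $c_0$ lying in the contractible complex $X_n$ constructed so far, chosen once and for all at the start of the induction. The base case $n=1$ is immediate since a single apartment is already contractible. For the inductive step, given a new apartment $F_{n+1}$, I pick a chamber $c_{n+1}\in F_{n+1}$ and invoke the existence axiom for common apartments (any two chambers lie in some common apartment) to produce an apartment $E_{n+1}\ni c_0,c_{n+1}$. The candidate enlargement is $X_{n+1}:=X_n\cup F_{n+1}\cup E_{n+1}$, adjoining one connector apartment per step.

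To prove contractibility of $X_{n+1}$, I would apply a nerve-type argument to the cover of $X_{n+1}$ by its constituent apartments. The key building-theoretic input, recalled in \Cref{sec:affine}, is that apartments of a discrete irreducible Euclidean building are combinatorially convex, so any non-empty finite intersection of apartments is convex in each ambient apartment and hence contractible. The good-cover nerve theorem then reduces contractibility of $X_{n+1}$ to that of the nerve. The expected portion of this nerve is a full simplex on the connector apartments $\{E_1,\ldots,E_{n+1}\}$ (they share $c_0$), with each $F_i$ attached as a pendant through the edge $\{F_i,E_i\}$; such a simplex with pendant edges is contractible.

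The main obstacle is that accidental intersections $F_i\cap F_j$ or $F_i\cap E_j$ for $j\neq i$ contribute extra simplices to the nerve and may produce obstructing cycles. Controlling these with only a bounded-per-step number of auxiliary apartments, so that the total count stays linear in $n$, is the crux of the proof and is what determines the constant $k$. My expectation is to adjoin, for each potentially obstructing cycle in the nerve, one auxiliary apartment passing through $c_0$ and a common chamber of the offending accidental intersection; the finite local complexity of the building (bounded Weyl group, bounded simplex links) should bound the number of such corrections per induction step by a constant depending only on the building type. A fallback strategy, should this uniform control prove unworkable, is a direct CAT(0) convex-hull argument: the closed CAT(0) convex hull of $F_1\cup\cdots\cup F_n$ is contractible by CAT(0) geometry, and one would instead show that it admits a cover by $O(n)$ apartments using that bounded convex subcomplexes of a discrete Euclidean building sit inside a bounded union of apartments with linearly controlled count.
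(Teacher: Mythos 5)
There is a genuine gap, and it sits exactly where you flag it: the ``crux'' of controlling accidental intersections is not an implementation detail but the whole difficulty, and neither of your two strategies resolves it. In the nerve picture, a single bad \emph{triple} of apartments with pairwise non-empty but globally empty intersection already makes the nerve a hollow triangle, and such configurations genuinely occur in (say) $\tilde A_2$-buildings; with $n$ apartments you face up to $\binom{n}{2}$ accidental edges and $\binom{n}{3}$ potentially unfilled triangles, so the number of ``corrections'' is not per-step bounded and threatens to be superlinear, destroying the $kn$ count. ``Finite local complexity'' cannot rescue this, because the intersection of two apartments is a global object (a half-apartment, or an essentially arbitrary convex chamber subcomplex), not something controlled by links or the Weyl group; moreover each correcting apartment you adjoin creates new accidental intersections of its own, so the repair process is not visibly terminating with linear count. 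The fallback is also not viable: apartments are unbounded, so the hypothesis ``bounded convex subcomplexes lie in a bounded union of apartments'' does not apply, and the $\mathrm{CAT}(0)$ convex hull of a union of $n$ flats is neither a subcomplex in general nor coverable by $O(n)$ apartments.

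The paper's proof avoids all of this by a different mechanism: rather than \emph{connecting} each $F_i$ to a base chamber, it \emph{absorbs} each $F_i$ into apartments through the base chamber. Fix a chamber $c$ and let $k$ be the number of chambers of the spherical Coxeter complex. For each chamber $\xi$ at infinity of $\partial F_i$, choose an apartment $E_\xi$ containing $c$ with $\xi\in\partial E_\xi$; taking a special vertex $y$ of $c$, the sector based at $y$ representing $\xi$ lies in $E_\xi$ by combinatorial convexity, and the union of these $k$ sectors contains all of $F_i$. Hence $F_1\cup\cdots\cup F_n\cup E_1\cup\cdots\cup E_{kn}$ equals the union of the $E_j$ alone, which is a union of apartments all containing $c$; such a union is combinatorially starlike with respect to $c$ and therefore contractible by the same shellability argument as in the Solomon--Tits theorem. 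No nerve argument and no induction on $n$ are needed, and the constant $k$ appears for structural reasons (the number of chambers at infinity of an apartment) rather than from an obstruction count.
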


\begin{proof}
We define $k$ to be the number of chambers of the spherical Coxeter complex associated to the building. Specifically, we realise it as the number of chambers at infinity of any apartment.

Fix some chamber $c$ of the Euclidean building. Given any apartment $F$ and any chamber $\xi$ of the apartment at infinity $\partial F$, choose some apartment $E_\xi$ containing $c$ with $\xi\in \partial E_\xi$; for the existence of such $E_\xi$, see Prop.~7.6 in~\cite{Weiss_affine}. We claim that the union of those $k$ apartments $E_\xi$ contains $F$.

Indeed, select a special vertex $y$ of $c$ and consider the sector $S_\xi$ based at $y$ and representing $\xi$. The combinatorial convexity of apartments implies that $S_\xi$ is contained in $E_\xi$. However, it is known that the union of the sectors $S_\xi$ contains $F$ as $\xi$ ranges over the chambers of $\partial F$; this holds in the greater generality of possibly non-discrete Euclidean buildings, see, e.g., the proof of~\cite[Lem.~6.3]{Hitzelberger2011} or of~\cite[Prop.~7.3]{Bennett-Schwer}. This justifies the claim.

\smallskip
Returning to the statement of the proposition, we define the family $E_j$ as the collection of all $E_\xi$ chosen as above for each $F=F_1, \ldots, F_n$. By the claim, the union in the statement of the proposition reduces to the union of all $E_j$. That union is \textbf{combinatorially starlike} with respect to $c$, which by definition means the following: any minimal gallery from $c$ to any chamber in the union remains in this union. Indeed, this holds by combinatorial convexity of the apartments since every $E_j$ contains $c$. It only remains to note that, in a Euclidean building, combinatorially starlike chamber subcomplexes are contractible. This is Exercise~4.130 in~\cite{Abramenko-Brown} and it holds by exactly the same shellability argument as used for the Solomon--Tits theorem to obtain the contractibility of the Euclidean building itself; see~\cite[Thm.~4.127]{Abramenko-Brown} or~\cite[App.~II]{Garland}.
\end{proof}

\subsection{The flatmate complex}
\label{sec:flatmate}%
We begin with one more general construction of simplicial complexes. Let $X$ be a set and $\sF$ a cover of $X$. Given $F\in \sF$, the hypersimplex $\simplex F$ is a full subcomplex of $\simplex X$. We can therefore define a subcomplex $\simplex_\sF X$ of $\simplex X$ by
\[
\simplex_\sF X = \bigcup_{F\in\sF} \simplex F.
\]
We now apply the nerve principle of \Cref{thm:nerve:infinite} to these complexes.

\begin{cor}\label{cor:abstract-flatmate}
Let $X\neq \varnothing$ be a set and let $\sF$ be a cover of $X$ by non-empty subsets.

If $\nerve \sF$ is uniformly acyclic, then so is $\simplex_\sF X$.
\end{cor}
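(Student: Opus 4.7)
The plan is to apply the uniform nerve principle of \Cref{thm:nerve:infinite} directly to $\Sigma = \simplex_\sF X$ equipped with the cover by full simplices
\[
\sF^\circ = \{\simplex F : F \in \sF\} \se \sub(\simplex_\sF X).
\]
By the very definition $\simplex_\sF X = \bigcup_{F\in\sF} \simplex F$, the family $\sF^\circ$ is a cover of $\Sigma$ by subcomplexes, so the set-up of \Cref{thm:nerve:infinite} is in place.

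First I would verify the intersection hypothesis. For any finite subfamily $F_1,\ldots,F_k \in \sF$ with $\bigcap_i F_i \neq \varnothing$, one has $\bigcap_i \simplex F_i = \simplex(\bigcap_i F_i)$, since a non-empty finite subset of $X$ lies in every $\simplex F_i$ precisely when it is contained in every $F_i$. This is a full simplex on the vertex set $\bigcap_i F_i$, and it is a full subcomplex of $\simplex_\sF X$: any simplex of $\simplex_\sF X$ whose vertices lie in $\bigcap_i F_i$ is, tautologically, itself a non-empty finite subset of that intersection, hence a simplex of $\simplex(\bigcap_i F_i)$.

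Next I would identify $\nerve \sF^\circ$ with $\nerve \sF$. The map $F \mapsto \simplex F$ is a bijection $\sF \to \sF^\circ$ (distinct subsets $F \se X$ give distinct collections of non-empty finite subsets), and the preceding calculation shows that a finite subfamily of $\sF$ has non-empty intersection if and only if the corresponding subfamily of $\sF^\circ$ does. This yields a simplicial isomorphism $\nerve \sF \cong \nerve \sF^\circ$, transferring the assumed uniform acyclicity to $\nerve \sF^\circ$. Applying \Cref{thm:nerve:infinite} then gives uniform acyclicity of $\simplex_\sF X$ and concludes the proof.

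I do not expect any genuine obstacle here: the corollary is essentially a reformulation of \Cref{thm:nerve:infinite} in the special case where the covering subcomplexes are full simplices on a family $\sF$ of subsets of the ambient set $X$. The only point worth double-checking is that the full-simplex hypothesis on intersections in \Cref{thm:nerve:infinite} is satisfied, which, as sketched above, reduces to the tautological identity $\bigcap_i \simplex F_i = \simplex(\bigcap_i F_i)$.
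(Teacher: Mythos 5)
Your proof is correct and follows essentially the same route as the paper: cover $\simplex_\sF X$ by the full simplices $\simplex F$, observe that $\bigcap_i \simplex F_i = \simplex\bigl(\bigcap_i F_i\bigr)$ so that the intersection hypothesis holds and the nerve of this cover is canonically isomorphic to $\nerve\sF$, and then invoke \Cref{thm:nerve:infinite}. The extra care you take with the bijection $F\mapsto\simplex F$ and the fullness of the intersections is a fine (if slightly more verbose) elaboration of the paper's argument.
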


\begin{proof}
By definition, the complex $\simplex_\sF X$ is covered by the family of subcomplexes $\simplex F$ as $F$ ranges over $\sF$. For any collection $F_1, \ldots, F_n$ of elements of $\sF$, we have
\[
\simplex\big(F_1\cap \ldots \cap F_n\big) = \simplex(F_1)\cap \ldots \cap \simplex(F_n).
\]
Thus the nerve of this cover of $\simplex_\sF X$ is canonically isomorphic to the nerve $\nerve \sF$ of the cover of $X$. We are therefore indeed in the situation of \Cref{thm:nerve:infinite}.
\end{proof}

We now specialise to buildings and flats.

Consider a building $\sB$ with apartment system $\sA$. The apartment system provides in particular a cover $\sF$ of the set $X$ of vertices of $\sB$; formally,
\[
\sF = \big\{ F= \vertex(A) : A \in \sA \big\} \kern3mm\text{covers} \kern3mm X= \vertex(\sB).
\]
The following is the simplicial form of the algebraic definition in terms of chain groups that we proposed with Bucher in~\cite[Scholium]{Bucher-Monod_tree}.

\begin{defi}
The \textbf{flatmate complex} of the building $\sB$ is the simplicial complex $\simplex_\sF X$ as defined above. We also refer to it, in Tits's tongue, as the \textbf{cokotcomplex} of $\sB$.
\end{defi}

We can now answer the problem suggested in~\cite{Bucher-Monod_tree}, as announced in \Cref{ithm:flatmate}. We restate it here since the notation has now been introduced:

\begin{thm}\label{thm:flatmate}
The flatmate complex of any discrete irreducible Euclidean building is uniformly acyclic.
\end{thm}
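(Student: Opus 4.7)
The plan is to combine \Cref{cor:abstract-flatmate} with the support-control criterion \Cref{thm:univ}, using \Cref{prop:add-flats} to supply the necessary quantitative bound and the classical nerve theorem to provide acyclicity.

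First I would apply \Cref{cor:abstract-flatmate} with $X = \vertex(\sB)$ and $\sF = \{\vertex(A) : A \in \sA\}$: this reduces the statement to proving that the nerve $\nerve\sF$ is uniformly acyclic. Next, to this nerve I would apply \Cref{thm:univ}. Concretely, given any finite collection $F_1, \ldots, F_n$ of vertices of $\nerve\sF$ (i.e.\ apartments of $\sB$), I need to exhibit an acyclic subcomplex of $\nerve\sF$ of size bounded by $\fhi(n)$ containing all of them. The natural candidate is the (full) subcomplex $\nerve\sF'$ of $\nerve\sF$ spanned by a suitable finite enlargement $\sF' \se \sF$ of $\{F_1,\ldots,F_n\}$.

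For the enlargement, I would use \Cref{prop:add-flats} to add $kn$ further apartments $E_1, \ldots, E_{kn}$ so that the union $U = F_1 \cup \cdots \cup F_n \cup E_1 \cup \cdots \cup E_{kn}$ is contractible; this gives $\fhi(n) = (k+1)n$. It then remains to check that $\nerve \sF'$, where $\sF' = \{F_1,\ldots,F_n, E_1,\ldots,E_{kn}\}$, is acyclic. Since each apartment is isometric to a Euclidean space, and any non-empty intersection of apartments in an apartment system of a Euclidean building is combinatorially convex inside each of them (\Cref{sec:affine}), every non-empty intersection of subfamilies of $\sF'$ is a convex subcomplex of a CAT(0) space, hence contractible. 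The classical (topological) nerve theorem then gives that $\nerve\sF'$ is homotopy equivalent to $U$, which is contractible by construction; in particular $\nerve\sF'$ is acyclic. Since $\nerve\sF'$ is a full subcomplex of $\nerve\sF$ on $(k+1)n$ vertices containing $F_1,\ldots,F_n$, the hypothesis of \Cref{thm:univ} is satisfied with control function $\fhi(n) = (k+1)n$, so $\nerve\sF$ is uniformly acyclic, and hence by \Cref{cor:abstract-flatmate} so is the flatmate complex.

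The main obstacle is the acyclicity of $\nerve\sF'$: \Cref{thm:nerve} is \emph{not} available here because intersections of apartments, while combinatorially convex, are not full simplices, so one has to appeal to the classical nerve theorem for contractible covers rather than to the uniform version developed in \Cref{sec:nerve}. The point of the argument is precisely that this classical, non-uniform nerve input is upgraded to uniform acyclicity of $\nerve\sF$ through the support-control principle, and then transported to the flatmate complex by \Cref{cor:abstract-flatmate}. This is the place where the building-theoretic estimate of \Cref{prop:add-flats} is essential, since it provides the linear bound on the number of additional apartments required to make a union contractible.
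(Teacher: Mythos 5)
Your proposal is correct and follows essentially the same route as the paper: the paper likewise reduces via \Cref{cor:abstract-flatmate} to the uniform acyclicity of the nerve of the apartment system, verifies the hypothesis of \Cref{thm:univ} with $\fhi(n)=(k+1)n$ using \Cref{prop:add-flats}, and obtains acyclicity of the finite subnerve from the classical Borsuk nerve lemma together with the contractibility of apartment intersections coming from combinatorial convexity. Your observation that \Cref{thm:nerve} is unavailable here and must be replaced by the ordinary nerve lemma is exactly the point made in the paper's proof of \Cref{prop:build-nerve}.
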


The remaining ingredient for the proof of this theorem is as follows.

\begin{thm}\label{thm:build-nerve}
Let $\sB$ be a discrete irreducible Euclidean building with apartment system $\sA$. Then the nerve $\nerve\sA$ is uniformly acyclic.
\end{thm}

\begin{proof}
We shall argue that the simplicial complex $\Sigma=\nerve\sA$ satisfies the assumption of \Cref{thm:univ} for the function $\fhi(n) = (k +1)n$, where $k$ is the apartment size of the associated spherical building as in \Cref{sec:affine}. To that end, note that for any non-empty subset $\sA_0\se \sA$, the nerve $\nerve\sA_0$ is a subcomplex of $\Sigma$. In view of \Cref{prop:add-flats}, we only need to justify the following claim:

Given any non-empty finite subset $\sA_0$ of $\sA$, we consider the simplicial subcomplex $\sB_0$ of $\sB$ covered by $\sA_0$. The claim is that if the simplicial complex $\sB_0$ is contractible, then so is the nerve $\nerve\sA_0$.

To justify the claim, we appeal to the nerve lemma in \emph{ordinary} simplicial homology. This is sometimes called the Borsuk nerve lemma; in precisely the setting of abstract simplicial complexes, two proofs can be found in~\cite[Lem.~1.1]{Bjorner81}. That lemma asserts that $\sB_0$ and $\nerve\sA_0$ have the same homotopy type provided that every non-empty intersection of subcomplexes taken from the family $\sA_0$ is contractible. Such an intersection is a subcomplex of the building $\sB$, namely a non-empty intersection of apartments. The combinatorial convexity of apartments implies that this intersection is contractible and therefore the claim is established.
\end{proof}

\begin{proof}[End of proof of \Cref{thm:flatmate} (i.e., \Cref{ithm:flatmate})]
The nerve $\nerve\sF$ coincides with the nerve $\nerve\sA$ of the cover of the building $\sB$ by its apartments. Thus \Cref{thm:build-nerve} states that $\nerve\sF$ is uniformly acyclic. Therefore, \Cref{cor:abstract-flatmate} implies indeed that the flatmate complex $\simplex_\sF X$ is uniformly acyclic.
\end{proof}

\section{Vanishing theorems}
\label{sec:vanishing}%
We first recall some terminology. A locally compact group $G$ is \textbf{boundedly acyclic} (as a topological group) if its continuous bounded cohomology with real coefficients $\hbc^n(G)$ vanishes in every degree $n>0$.

A topological group is \textbf{amenable} if every jointly continuous affine $G$-action on any non-empty convex compact set (in any Hausdorff locally convex topological vector space) admits a fixed point. This holds notably when $G$ is compact or soluble (e.g., abelian), and is preserved by group extensions.

A subgroup $H<G$ of the topological group $G$ is \textbf{co-amenable} in $G$ if $G$ has the above fixed-point property for the subclass of those convex compact sets having an $H$-fixed point. This holds for instance if $H$ has finite index, or if $H$ is normal with $G/H$ amenable.

\medskip
We shall use the well-known general principles summarised in the proposition below to reduce the proof of \Cref{ithm:algebraic} from general algebraic groups to the simple case. 

\begin{prop}\label{prop:reduc}
A locally compact group $G$ is boundedly acyclic in each of the following cases:
\begin{enumerate}[(i)]
\item $G$ admits a co-amenable closed subgroup which is boundedly acyclic;
\item $G$ admits an amenable normal closed subgroup $N\lhd G$ with $G/N$ boundedly acyclic;
\item $G$ is the direct product of finitely many boundedly acyclic groups;
\item $G$ is the quotient of a boundedly acyclic group by an amenable normal closed subgroup.
\end{enumerate}
\end{prop}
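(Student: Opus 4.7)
The strategy is to reduce each item to foundational theorems of Monod's monograph on continuous bounded cohomology of locally compact groups; nothing new is needed here.

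For (i), I invoke the fundamental fact that when $H<G$ is a closed co-amenable subgroup, the restriction map $\hbc^n(G,\RR) \to \hbc^n(H,\RR)$ is isometrically injective. In particular, if $H$ is boundedly acyclic then so is $G$.

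For (ii) and (iv), both rest on a single theorem: if $N\lhd G$ is a closed amenable normal subgroup, then the inflation induces an isometric isomorphism $\hbc^n(G/N,\RR) \cong \hbc^n(G,\RR)$ in every degree. Items (ii) and (iv) are the two directions of reading this equality.

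For (iii), a straightforward induction on the number of factors reduces the statement to the case $G = A \times B$ with both $A$ and $B$ boundedly acyclic. I consider the closed normal subgroup $A \times \{e\} \lhd G$ and the associated Hochschild--Serre spectral sequence in continuous bounded cohomology, whose $E_2$-page is
\[
E_2^{p,q} = \hbc^p\!\big(B, \hbc^q(A, \RR)\big).
\]
Since $A$ and $B$ commute in the direct product, the conjugation action of $G$ on $A$ is trivial on the $B$-factor, so $B$ acts trivially on the coefficient module $\hbc^q(A,\RR)$. Bounded acyclicity of $A$ then kills all rows $q>0$, while bounded acyclicity of $B$ kills $E_2^{p,0} = \hbc^p(B,\RR)$ for $p>0$. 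The $E_2$-page being concentrated at the origin, the spectral sequence degenerates and yields $\hbc^n(G,\RR)=0$ for all $n>0$.

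There is no substantive obstacle in this proposition; the only point requiring a modicum of vigilance is the invocation of the Hochschild--Serre spectral sequence for (iii), which needs to be available in the present locally compact generality. This is part of the foundational machinery developed in Monod's monograph and applies without difficulty in the settings to which the proposition will subsequently be used.
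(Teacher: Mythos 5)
Your items (i), (ii) and (iv) are exactly the paper's proof: the paper simply cites the isometric injectivity of restriction to a co-amenable closed subgroup \cite[Prop.~8.6.6]{Monod} and the inflation isomorphism along an amenable closed normal subgroup \cite[Cor.~8.5.2]{Monod}, read in both directions for (ii) and (iv). Nothing to add there.

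For (iii) there is one point you should not gloss over. You invoke a Hochschild--Serre spectral sequence in continuous bounded cohomology with $E_2^{p,q}=\hbc^p\big(B,\hbc^q(A,\RR)\big)$, and you assert that this "applies without difficulty" in the locally compact setting. That is an over-claim: the identification of the $E_2$-page in this form is \emph{not} part of the established machinery. Chapter~12 of Monod's monograph only identifies the relevant terms in low degrees (essentially $q\leq 2$) or under vanishing hypotheses, one obstruction being that $\hbc^q(A,\RR)$ carries only a seminorm and need not be a coefficient Banach module at all, so the expression $\hbc^p(B,\hbc^q(A,\RR))$ does not even typecheck in general; the full spectral sequence is a known open problem. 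Your argument nevertheless survives, precisely because bounded acyclicity of $A$ kills all the problematic coefficient modules: this degenerate situation is exactly what \cite[Prop.~12.2.1]{Monod} (inflation along $N\lhd G$ is an isomorphism in a range where $\hbc^q(N,\cdot)$ vanishes) together with \cite[Prop.~12.2.2(ii)]{Monod} (the direct-product case) covers, and that combination is what the paper cites. So the fix is merely to replace the appeal to the full spectral sequence by these partial results; the structure of your argument for (iii) is otherwise the intended one.
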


\begin{proof}
For~(i), see~\cite[Prop.~8.6.6]{Monod}. For~(ii) and~(iv), see~\cite[Cor.~8.5.2]{Monod}. For~(iii), combine~\cite[Prop.~12.2.1]{Monod} with ~\cite[Prop.~12.2.2(ii)]{Monod}. These references make a second countability assumption which is not necessary for real coefficients (but in our case all algebraic groups are second countable anyway).
\end{proof}

\subsection{Automorphism groups of buildings}
Recall that a group of building automorphisms is \textbf{strongly transitive} if it acts transitively on the set of pairs consisting of a chamber and an apartment containing it.\footnote{It is ironic that the term Tits chose for his marvelous concept of building is \emph{immeuble} --- literally: \emph{that which cannot be moved} --- whereas he demonstrated how deeply buildings are entwined with their rich transformation groups.} The fact that \Cref{ithm:building} should follow from \Cref{thm:flatmate} was introduced in~\cite{Bucher-Monod_tree}. We shall nonetheless give all details of the argument since the language is somewhat different here.

\begin{proof}[Proof of \Cref{ithm:building}]
Let $G$ be a locally compact group with a strongly transitive proper action by automorphisms on a locally finite Euclidean building $\sB$ with apartment system $\sA$. In order to prove the vanishing of the bounded cohomology $\hbc^n(G)$ for all $n>0$, we can assume that $\sB$ is irreducible and hence fits the assumptions of \Cref{thm:flatmate}. Indeed, the automorphism group of a product admits the product of the automorphism groups of the factors as a finite index subgroup.

We therefore consider the flatmate complex $\simplex_\sF X$ on $X=\vertex\sB$ defined in \Cref{sec:flatmate} and note that $G$ acts on it by simplicial automorphisms.
The bounded simplicial cochains of this flatmate complex yield an augmented cochain complex
\[
\xymatrix{0 \ar[r] & \RR \ar[r] & {\ell^\infty \big((\simplex_\sF X)_{0}\big)}  \ar[r] & {\ell^\infty \big((\simplex_\sF X)_{1}\big)}  \ar[r] &
 \cdots }
\]
In terms of vertices, a $q$-cochain $f\in \ell^\infty \big((\simplex_\sF X)_{q}\big)$ is a bounded alternating function on $(q+1)$-tuples of vertices of the building, where each tuple is restricted to lie in some apartment. (This is the viewpoint adopted in~\cite{Bucher-Monod_tree}.)

\Cref{thm:flatmate} above implies that this cochain complex is acyclic since it is the norm dual of the normed chain complex, which is uniformly acyclic by \Cref{thm:flatmate} (this is the duality of vanishing introduced by~\cite{Matsumoto-Morita}). On the other hand, this dual cochain complex is a $G$-complex of Banach $G$-modules and thus it is a resolution of $\RR$ in the sense of continuous bounded cohomology~\cite{Monod}.

The properness assumption implies that for each $q\geq 0$ the $G$-action on any set of $(q+1)$-tuples is also proper; this guarantees that each of the modules $\ell^\infty \big((\simplex_\sF X)_{q}\big)$ is relatively injective in the sense of continuous bounded cohomology; see~\cite[Thm.~4.5.2]{Monod}. It follows that the continuous bounded cohomology of $G$ is canonically realised by the subcomplex of $G$-invariant functions in $\ell^\infty \big((\simplex_\sF X)_{q}\big)$; see e.g., \cite[Thm.~7.2.1]{Monod}.

We continue along the lines that we proposed with Bucher in~\cite{Bucher-Monod_tree}. Choose some apartment $E\in \sA$ and denote by $H<G$ its (set-wise) stabiliser in $G$. The restriction to tuples in $E$ is a chain map
\[
\ell^\infty \big((\simplex_\sF X)_{q}\big)^G \lra \ell^\infty_\mathrm{alt} \big( (\vertex E)^{q+1}\big)^H
\]
where $\ell^\infty_\mathrm{alt}$ denotes bounded alternating functions. The claim is that this map is bijective, thus establishing an isomorphism between the continuous bounded cohomology of $G$ and of $H$. This will complete the proof because $H$ is an amenable group and is therefore boundedly acyclic.

The injectivity of the claim follows from the transitivity of $G$ on $\sA$ and the definition of the flatmate complex $\simplex_\sF X$. Surjectivity is the only time where we are using the strong transitivity, as follows.

Pick $f$ in $\ell^\infty_\mathrm{alt}  ( (\vertex E)^{q+1})^H$ and let $x$ be any $(q+1)$-tuple of vertices in any apartment (i.e., $x$ represents any simplex of the flatmate complex). We know that $gx \se E$ for some $g\in G$ and we want to extend $f$ to this $x$ by setting $f(x) = f(g x)$. To show that this is well-defined and that the resulting function $f$ on $(\simplex_\sF X)_{q}$ is $G$-invariant, the only point to verify is that any other $g'\in G$ with $g' x \se E$ satisfies $f(g' x) = f(gx)$.

Consider the two apartments $g\inv E$ and ${g'}\inv E$; both contain $x$. By strong transitivity, there exists $q\in G$ which maps $g\inv E$ to ${g'}\inv E$ and such that $q$ fixes pointwise the intersection $g\inv E \cap {g'}\inv E$; see~\cite[Prop.~6.6]{Abramenko-Brown}. In particular, $q$ fixes every vertex in $x$. Now $h=g' q g\inv$ is an element of $H$ and $h g x = g' q x = g'x$. Since $f$ was supposed $H$-invariant on tuples in $E$, it follows $f(g x) = f(g' x)$ as desired.
\end{proof}

\begin{rem}
In view of potential generalisations, we point out that the above argument used only the bounded acyclicity, rather than the amenability, of the apartment stabiliser $H$.
\end{rem}

\subsection{Algebraic groups}
We now consider an arbitrary algebraic group $\GG$ over a non-Archimedean local field $k$.

\begin{proof}[Proof of \Cref{ithm:algebraic}]
 Our goal is to show that the locally compact group $G=\GG(k)$ is boundedly acyclic. We recall here that $G$ is endowed with the canonical Hausdorff ``strong'' topology~\cite[I\S10]{Mumford_red99}.

We use general structure theory to reduce $\GG$ to a simpler class of groups, the class of simple groups. We need however to keep track of $G=\GG(k)$ in view of the discrepancy between quotients of algebraic groups and of the corresponding $k$-points. 

\medskip
There is no loss of generality in assuming $\GG$ connected since passing to $\GG^0$ will replace $G$ by a finite index closed subgroup, which is fine by \Cref{prop:reduc}(i).

We first recall that there is a canonical \textbf{affinisation} quotient, i.e.,  a faithfully flat morphism $\psi\colon \GG\to\GG_{\mathrm{aff}}$ to an affine group $\GG_{\mathrm{aff}}=\mathrm{Spec}\sO(\GG)$ and that the kernel $\Ker\psi$ is contained in the center of $\GG$. This is a general form of a theorem of Rosenlicht~\cite{Rosenlicht56} established in~\cite[III.3.8]{Demazure-Gabriel}: combine Thm.~8.2 and Cor.~8.3 therein under our assumption $\GG=\GG^0$. It can also be read in~\cite[Thm.~1]{Brion_AG}.

In particular, $\Ker\psi$ is commutative and $\GG_{\mathrm{aff}}$, being affine, is a linear algebraic group~\cite[\S3.4]{Waterhouse}.

The quotient of $\GG_{\mathrm{aff}}$ by its radical is a connected semi-simple group $\SSS$. (In positive characteristic, it might be pseudo-semi-simple, which can be bypassed either by taking a finite field extension or by using below Solleveld's extension~\cite{Solleveld} of Bruhat--Tits buildings to the pseudo-semi-simple case.) Thus $\SSS$ is the almost-direct product of some number $r$ of (quasi-)simple connected factors $\GG_i$, $i=1, \ldots r$. We reorder the factors so that $\GG_i$ is $k$-isotropic exactly when $i\leq s$ for some $0\leq s \leq r$. 

\smallskip

Recall that $\GG_i(k)^+$ denotes the normal subgroup of $\GG_i(k)$ generated by the $k$-points of the $k$-split unipotent subgroups of $\GG_i$. This group is introduced in detail by Borel--Tits~\cite[\S6]{Borel-Tits73}; several equivalent definitions are given in~\cite[6.2]{Borel-Tits73}. In many cases (including local fields of characteristic zero), $\GG_i(k)^+ = \GG_i(k)$ holds for isotropic groups; in the general case, we shall use that the quotient $\GG_i(k) / \GG_i(k)^+$ is compact~\cite[6.14]{Borel-Tits73}.

Consider first $i\leq s$. Then Bruhat--Tits theory~\cite{Bruhat-Tits1972,Bruhat-Tits1984} (as above we refer to~\cite{Abramenko-Brown}, \cite{Ronan_buildings2} and~\cite{Weiss_affine} for background) shows that the locally compact group $\GG_i(k)$ acts strongly transitively on an irreducible locally finite Euclidean building. The resulting action of $\GG_i(k)^+$ is still strongly transitive; this follows from the decomposition given in~\cite[6.11(i)]{Borel-Tits73}. Moreover this action is proper since the center of $\GG_i(k)$ is finite. Therefore, \Cref{ithm:building} implies that $\GG_i(k)^+$ is boundedly acyclic for all $i\leq s$. If $i>s$, then by convention $\GG_i(k)^+$ is trivial. In conclusion, \Cref{prop:reduc}(iii) allows us to obtain that the product $\prod_{i=1}^r \GG_i(k)^+$ is boundedly acyclic. It follows by \Cref{prop:reduc}(iv) that $\SSS(k)^+$ is also boundedly acyclic because $\SSS(k)^+$ is the almost-direct product of the $\GG_i(k)^+$; see~\cite[6.2(iii)]{Borel-Tits73}.

\smallskip
At this point we consider the continuous group homomorphism
\[
f\colon G = \GG(k) \lra \GG_{\mathrm{aff}}(k) \lra \SSS(k) =  \GG_1(k)  \cdots  \GG_r(k).
\]
We claim that the image $f(G)$ in $\SSS(k)$ contains $\SSS(k)^+$. Indeed this image is a Zariski-dense normal subgroup; therefore it must contain each $\GG_i(k)^+$ since the latter is abstractly simple modulo its center by the main result of~\cite{Tits64}. The claim follows.

We deduce that the pre-image $G^+<G$ of $\SSS(k)^+$ is a normal cocompact subgroup of $G$. In particular it is co-amenable and therefore, by \Cref{prop:reduc}(i), it suffices to show that $G^+$ is boundedly acyclic. Since we already know that  $\SSS(k)^+$ is boundedly acyclic, this follows from \Cref{prop:reduc}(ii) if we justify that the kernel of $f|_{G^+}$ is amenable. By construction, this kernel is contained in a central extension of the group of $k$-points of the radical of $\GG_{\mathrm{aff}}$; therefore it is soluble and this completes the proof.
\end{proof}

\begin{rem}
$\Ker\psi$ and the radical of $\GG_{\mathrm{aff}}$ could have been combined into one ``radical of $\GG$'' in the above reductions, but the author is more comfortable separating the two steps since the classical structure theory is often stated in the context of \emph{linear} algebraic groups~\cite{Borel91,Humphreys,Springer98}.
\end{rem}

We still need to justify \Cref {iprop:SL2}, for which we claim no originality: the idea is taken from the introduction of~\cite{Burger-MonodERN}, but translated to the non-Archimedean context.

\begin{proof}[Proof of \Cref {iprop:SL2}]
Let $G=\SL_2(\QQ_p)$ and let $\Gamma$ be a (finite rank non-abelian) free group realised as a cocompact lattice in $G$. Then $\hb^2(\Gamma)$ is non-trivial: this was already established by Johnson in~\cite[Prop.~2.8]{Johnson} and rediscovered by Brooks~\cite[\S3]{Brooks}. The coefficient induction of bounded cohomology~\cite[\S10.1]{Monod} implies that $\hcb^2(G, L^\infty(G/\Gamma))$ is also non-trivial. Since we are in degree two, a ``double ergodicity with coefficients'' argument shows that $\hcb^2(G, L^2(G/\Gamma))$ is non-vanishing: this was established in~\cite{Burger-Monod1}, see also \cite[Cor.~11]{Burger-Monod3}.

Consider now a direct integral decomposition of $L^2(G/\Gamma)$ into irreducible continuous unitary representations of $G$, which is even a Hilbertian sum decomposition in this case~\cite[Thm.~9.2.2]{Deitmar-Echterhoff}. Appealing again to double ergodicity with coefficients, we conclude that some of these representations must have non-trivial $\hcb^2$ (see, e.g., Thm.~3.3 and Cor.~3.4 in~\cite{Monod-Shalom2}; these are stated for discrete groups but hold verbatim in the locally compact case).
\end{proof}

\begin{rem}
The first paragraph of the above argument can be replaced with a geometric construction of a cocycle with coefficients in a multiple of the regular representation of $G$, as explained in~\cite[\S2]{Monod-ShalomCRAS} and~\cite[\S4]{Monod-Shalom1}. Then the second paragraph holds using a direct integral decomposition (the Plancherel decomposition).
\end{rem}

\subsection{Arithmetic groups}
We retain the notation of \Cref{ithm:Sadic} and we refer to~\cite[\S I.3]{Margulis} for background on $S$-arithmetic groups, notably the following few facts:

Given $v\in S$ we denote by $K_v$ the corresponding completion and recall that $\GG(K_v)$ is non-compact precisely when $\GG$ is $K_v$-isotropic, i.e., when $\mathrm{rank}_{K_v}(\GG)>0$. Let $S_1\se S$ be the collection of those isotropic valuations in $S$; we can assume $S_1\neq \emptyset$ since otherwise the statement is void.  The assumption $S_0\se S$ implies that $\Gamma$ is a lattice in $\prod_{v\in S_1} \GG(K_v)$ and this lattice is irreducible by the strong approximation theorem. We also note at this point that \Cref{ithm:Sadic:0} is indeed a particular case of \Cref{ithm:Sadic}.

\begin{proof}[End of the proof of \Cref{ithm:Sadic}]
Applying~\cite[Cor.~1.4]{MonodVT}, we deduce that the restriction map from the full product of Archimedean as well as non-Archimedean groups
\[
\hbc^n\big(\prod_{v\in S_1} \GG(K_v)\big) \lra \hb^n(\Gamma)
\]
is an isomorphism for all $n< 2 \sum_{v\in S} \mathrm{rank}_{K_v}(\GG)$. Next we observe that the restriction map from the Lie group considered in \Cref{ithm:Sadic} is induced by the composition
\[
%
\Gamma \lra  \prod_{v\in S_1} \GG(K_v) \lra \prod_{v\in S_0} \GG(K_v) = L,
\]
where the right arrow is the projection. Therefore, what is needed is to prove that the inflation map corresponding to that projection is an isomorphism. \Cref{ithm:algebraic} implies that this is true in all degrees, using the Hochschild--Serre sequence; specifically, the statements of Prop.~12.2.1 and Prop.~12.2.2(ii) in~\cite{Monod}. 
\end{proof}


\bibliographystyle{../BIB/amsalpha-nobysame}
\bibliography{../BIB/ma_bib}

\end{document}